\newtheorem{assumption}{Assumption}
\newtheorem{remark}{Remark}
\newtheorem{theorem}{Theorem}
\newtheorem{corollary}{Corollary}
\newtheorem{lemma}{Lemma}
\newcommand{\tnorm}{\@ifstar\@tnorms\@tnorm}
\newcommand{\@tnorms}[1]{%
  \left|\mkern-1.5mu\left|\mkern-1.5mu\left|
   #1
  \right|\mkern-1.5mu\right|\mkern-1.5mu\right|
}
\newcommand{\@tnorm}[2][]{%
  \mathopen{#1|\mkern-1.5mu#1|\mkern-1.5mu#1|}
  #2
  \mathclose{#1|\mkern-1.5mu#1|\mkern-1.5mu#1|}
}
\newcommand{\jump}[1]{\llbracket #1 \rrbracket}
\newcommand{\divergence}{\operatorname{div}}
\title[Pressure-robust EDG for the Stokes problem]{A pressure-robust embedded discontinuous Galerkin method for
  the Stokes problem by reconstruction operators}
\thanks{
  PL has been funded by the Austrian Science Fund (FWF) through the research programm ``Taming complexity in partial differential systems'' (F65) - project ``Automated discretization in multiphysics'' (P10).  SR gratefully acknowledges support from the Natural Sciences and
      Engineering Research Council of Canada through the Discovery
      Grant program (RGPIN-05606-2015).}
\author[P.~L.~Lederer]{Philip L. Lederer}
\address{Institute for Analysis and Scientific Computing, TU Wien,
Wiedner Hauptstra{\ss}e 8-10, 1040 Wien, Austria}
\email{philip.lederer@tuwien.ac.at}
\author[S.~Rhebergen]{Sander Rhebergen}
\address{Department of Applied Mathematics,
    University of Waterloo, Canada}
\email{srheberg@uwaterloo.ca, http://orcid.org/0000-0001-6036-0356}
\begin{document}
\begin{abstract}
  The embedded discontinuous Galerkin (EDG) finite element method for
  the Stokes problem results in a point-wise divergence-free
  approximate velocity on cells. However, the approximate velocity is
  not $H(\divergence)$-conforming and it can be shown that this is the
  reason that the EDG method is not pressure-robust, i.e., the error
  in the velocity depends on the continuous pressure. In this paper we
  present a local reconstruction operator that maps discretely
  divergence-free test functions to exactly divergence-free test
  functions. This local reconstruction operator restores
  pressure-robustness by only changing the right hand side of the
  discretization, similar to the reconstruction operator recently
  introduced for the Taylor--Hood and mini elements by Lederer et
  al. (SIAM J. Numer. Anal., 55 (2017), pp. 1291--1314). We present an
  a priori error analysis of the discretization showing optimal
  convergence rates and pressure-robustness of the velocity
  error. These results are verified by numerical examples. The
  motivation for this research is that the resulting EDG method
  combines the versatility of discontinuous Galerkin methods with the
  computational efficiency of continuous Galerkin methods and accuracy
  of pressure-robust finite element methods.
\end{abstract}
\keywords{
  Stokes equations, embedded, discontinuous Galerkin finite element
  methods, pressure-robustness, exact divergence-free velocity
  reconstruction.}


\maketitle

\section{Introduction}
\label{sec:introduction}

Changing the body force of the continuous Stokes equations by a
gradient field changes only the pressure solution, not the velocity. A
finite element method for the Stokes equations that mimics this
property at the discrete level is called \emph{pressure-robust} and
results in an \emph{a priori} error estimate for the velocity that
does not depend on the pressure error scaled by the inverse of the
viscosity. The significance of this result is that large errors in the
pressure, as may occur for example in natural convection problems, do
not affect the velocity \cite{John:2017}.

A finite element for the Stokes equations that is both conforming and
divergence-free is pressure-robust \cite{Guzman:2014b, Guzman:2014a,
  Scott:1985}. These finite element methods, however, are generally
difficult to implement and traditional finite element methods often
relax one, or both, of these conditions. Unfortunately, the resulting
method is often not pressure-robust. It was observed in
\cite{Linke:2012, Linke:2014} that this is due to a lack of
$L^2$-orthogonality between irrotational and discretely
divergence-free vector fields. In \cite{Linke:2014}
$L^2$-orthogonality between irrotational and discretely
divergence-free vector fields is restored for the first-order
Crouzeix--Raviart element \cite{Crouzeix:1973} by replacing discretely
divergence-free vector fields by divergence-free lowest-order
Raviart--Thomas \cite{Boffi:book} velocity reconstructions wherever
$L^2$ scalar products occur in the momentum balance equations. This
simple modification (for the Stokes equations only the right-hand side
of the discretization needs to be modified locally) results in a
pressure-robust first-order Crouzeix--Raviart discretization of the
(Navier--)Stokes equations. For discretizations of the
(Navier--)Stokes equations using a `discontinuous' pressure
approximation this modification is generalized to nonconforming and
conforming mixed finite elements of arbitrary order in
\cite{Linke:2016c}. Pressure-robustness is restored for the
Taylor--Hood \cite{Hood:1974} and mini elements \cite{Arnold:1984},
which have `continuous' pressure approximations, in
\cite{Lederer:2017}.

An alternative to the above mentioned modification of traditional
finite element methods is to use $H(\divergence)$-conforming
discontinuous Galerkin (DG) methods \cite{Cockburn:2007b,
  Kanschat:2014}. $H(\divergence)$-conforming DG discretizations are
not only (automatically) pressure-robust, they are also ideally suited
for convection dominated flows due to the natural incorporation of
upwinding at element boundaries.

Unfortunately, DG methods are known to be computationally
expensive. Hybridizable discontinuous Galerkin (HDG) methods were
introduced to address this issue \cite{Cockburn:2009a}. This is
achieved by introducing new trace unknowns. The governing equations
are then posed cell-wise in terms of the approximate fields on a cell
and numerical fluxes. The numerical fluxes are defined in terms of the
traces of the approximate fields on the cell and the new trace
unknowns in such a way that the approximate fields defined on a cell
communicate only to fields that are defined on facets. This definition
of the numerical flux allows cheap elimination of all cell
degrees-of-freedom (DOFs), significantly reducing the number of
globally coupled DOFs. Recent years has seen the development of many
$H(\divergence)$-conforming HDG methods \cite{Fu:2019,
  Lehrenfeld:2016, Rhebergen:2018a} which, like the
$H(\divergence)$-conforming DG methods, are (automatically)
pressure-robust. To reduce the number of coupled DOFs even further,
$H(\divergence)$-conformity was introduced only in a relaxed manner in
\cite{Lederer:2017, Lederer:2019}. Finally, we want to mention the work
in \cite{Lederer:2019c, Lederer:2019b, lederer2019mass} where the
authors derived a mixed method with $H(\divergence)$-conforming
velocities.

The $H(\divergence)$-conforming HDG method introduced in
\cite{Rhebergen:2018a} introduces \emph{discontinuous} trace velocity
and trace pressure approximations. An alternative is to use
\emph{continuous} trace velocity and \emph{discontinuous} trace
pressure approximations. This results in the recently introduced
embedded-hybridized discontinuous Galerkin (EDG-HDG) method
\cite{Rhebergen:2020}. The $H(\divergence)$-conforming EDG-HDG method
has even less globally coupled DOFs than an HDG method due to the use
of a continuous trace velocity approximation. It is possible to lower
the number of globally coupled DOFs even further by using both
continuous trace velocity and trace pressure approximations. The
resulting method is known as an embedded discontinuous Galerkin (EDG)
method and was introduced for the Navier--Stokes equations in
\cite{labeur:2007, Labeur:2012}. It was demonstrated in
\cite{Rhebergen:2020}, using the preconditioner of
\cite{Rhebergen:2018b}, that CPU time and iteration count to
convergence is significantly reduced using continuous trace
approximations compared to using discontinuous trace approximations
(HDG method). Unfortunately, the EDG method is not pressure-robust.

In this paper we restore pressure-robustness of the EDG discretization
of the Stokes equations \cite{Labeur:2012, Rhebergen:2017}. As with
the HDG method, cell DOFs can be eliminated cheaply resulting in a
global system only for the velocity and pressure trace
approximations. Due to the continuity of the pressure trace
approximation we follow a similar approach as presented in
\cite{Lederer:2017} to restore pressure-robustness. Therein a
reconstruction operator for weakly divergence-free velocities was
defined which was based on solving local problems on vertex
patches. This local approach was motivated by the lifting techniques
introduced for equilibrated error estimators \cite{Braess:2008}. In
contrast to \cite{Lederer:2017}, where the reconstruction operator was
used to eliminate the local divergence on each cell, the
reconstruction operator in this work only enforces exact normal
continuity since the EDG solution is already exactly divergence-free
on a cell.

This paper is organized as follows. We present the EDG method for the
Stokes problem in \cref{sec:edgstokes}. \Cref{sec:pressurerobust}
presents the main result of this paper; a reconstruction operator to
restore pressure-robustness of the EDG method and an \emph{a priori}
error analysis. Numerical examples to support our theory are presented
in \cref{sec:numerical} and conclusions are drawn in
\cref{sec:conclusions}.

\section{The embedded discontinuous Galerkin method}
\label{sec:edgstokes}

In this section we present the embedded discontinuous Galerkin (EDG)
method for the Stokes problem. We introduce the approximation spaces,
the discrete problem, and discuss some properties of the discrete
Stokes problem.

\subsection{The Stokes problem}

Let $\Omega \subset \mathbb{R}^d$ be a polygonal ($d=2$) or polyhedral
($d=3$) domain and let $\partial\Omega$ denote its boundary. The
Stokes problem is given by: find the velocity $u:\Omega\to\mathbb{R}^d$
and (kinematic) pressure $p:\Omega \to \mathbb{R}$ such that
\begin{subequations}
  \begin{align}
    -\nu \nabla^2u + \nabla p &= f && \text{in } \Omega,
    \\
    \nabla \cdot u &= 0 && \text{in } \Omega,
    \\
    u &= 0 && \text{on } \partial\Omega,
    \\
    \int_{\Omega} p \dif x &= 0,
  \end{align}
  \label{eq:stokes}
\end{subequations}
where $f:\Omega \to \mathbb{R}^d$ is a given a body force and
$\nu \in \mathbb{R}^+$ is the kinematic viscosity.

\subsection{The discrete Stokes problem}
\label{ss:discreteStokesProblem}

To discretize the Stokes problem \cref{eq:stokes} by the EDG method,
we first introduce a triangulation $\mathcal{T} := \cbr{K}$ of
$\Omega$ consisting of non-overlapping cells $K$. We denote the
boundary of a cell $K$ by $\partial K$ and the outward unit normal
vector on $\partial K$ by $n$. The diameter of a cell $K$ is denoted
by $h_K$ and we define $h := \max_{K\in\mathcal{T}}h_K$. Two adjacent
cells $K^+$ and $K^-$ share an interior facet $F$, while a boundary
facet is part of $\partial K$ that lies on the domain boundary
$\partial \Omega$. The set of all facets is denoted by
$\mathcal{F} := \cbr{F}$. We denote the union of all facets by
$\Gamma^0$. 

We require the following approximation spaces:
\begin{subequations}
  \begin{align}
    V_h &:= \cbr[1]{v_h \in \sbr[0]{L^2(\Omega)}^d \ : \ v_h \in \sbr{P_k(K)}^d,\ \forall K\in\mathcal{T}},
    \\
    Q_h &:= \cbr[1]{q_h \in L^2(\Omega) \ : \ q_h \in P_{k-1}(K),\ \forall K\in\mathcal{T}},
    \\
    \bar{V}_h^d &:= \cbr[1]{\bar{v}_h \in \sbr[0]{L^2(\Gamma^0)}^d \ : \ \bar{v}_h \in \sbr{P_k(F)}^d,\ \forall F\in\mathcal{F},\ \bar{v}_h=0 \text{ on } \partial\Omega},
    \\
    \label{eq:approx_spaces_d}
    \bar{Q}_h^{m,d} &:= \cbr[1]{\bar{q}_h \in L^2(\Gamma^0) \ : \ q_h \in P_{m}(F),\ \forall F\in\mathcal{F}},
  \end{align}
  \label{eq:approx_spaces}
\end{subequations}
where $P_k(K)$ and $P_k(F)$ denote the set of polynomials of degree
$k$ on a cell $K$ and on a facet $F$, respectively. In this manuscript
we consider both the case where $m=k-1$ (for $k\ge 2$) and $m=k$ (for
$k\ge 1$). 

We next impose continuity of the `facet' spaces:
\begin{equation}
  \label{eq:approx_spaces_edg}
  \bar{V}_h := \bar{V}_h^d \cap \sbr[1]{C^0(\Gamma^0)}^d
  \quad \text{and} \quad
  \bar{Q}_h^m := \bar{Q}_h^{m,d} \cap C^0(\Gamma^0),
\end{equation}
and define $\boldsymbol{V}_h := V_h \times \bar{V}_h$ and
$\boldsymbol{Q}_h^m := Q_h \times \bar{Q}_h^m$. Function pairs in
$\boldsymbol{V}_h$ and $\boldsymbol{Q}_h^m$ are denoted by
$\boldsymbol{v}_h := (v_h, \bar{v}_h) \in \boldsymbol{V}_h$ and
$\boldsymbol{q}_h := (q_h, \bar{q}_h) \in \boldsymbol{Q}_h^m$. For
notational purposes we will drop the superscript $m$ in the definition
of the pressure space for the remainder of this paper if a result
holds for both $m=k-1$ and $m=k$.

The EDG method for the Stokes problem \cref{eq:stokes} is given by
\cite{Rhebergen:2020}: find
$(\boldsymbol{u}_h,\boldsymbol{p}_h) \in \boldsymbol{V}_h \times
\boldsymbol{Q}_h$, such that
\begin{subequations}
  \label{eq:discrete_stokes}
  \begin{align}
    \label{eq:discrete_stokes_mom}
    a_h(\boldsymbol{u}_h, \boldsymbol{v}_h) + b_h(\boldsymbol{p}_h, v_h) &= \int_{\Omega} f \cdot v_h \dif x
    && \forall \boldsymbol{v}_h \in \boldsymbol{V}_h,
    \\
    \label{eq:discrete_stokes_mass}
    b_h(\boldsymbol{q}_h, u_h) &= 0
    && \forall \boldsymbol{q}_h \in \boldsymbol{Q}_h,
  \end{align}
\end{subequations}
where
\begin{subequations}
  \begin{align}
    \label{eq:formA}
    a_h(\boldsymbol{u}, \boldsymbol{v}) :=&
    \sum_{K\in\mathcal{T}}\int_{K}\nu \nabla u : \nabla v \dif x
    + \sum_{K\in\mathcal{T}}\int_{\partial K}\frac{\nu \alpha}{h_K}(u - \bar{u})\cdot(v - \bar{v}) \dif s
    \\
    \nonumber
    &- \sum_{K\in\mathcal{T}}\int_{\partial K}\nu \sbr[1]{(u-\bar{u})\cdot \pd{v}{n}
    + \pd{u}{n}\cdot(v-\bar{v})} \dif s,
    \\
    \label{eq:formB}
    b_h(\boldsymbol{p}, v) :=&
    - \sum_{K\in\mathcal{T}}\int_{K}p \nabla \cdot v \dif x
    + \sum_{K\in\mathcal{T}}\int_{\partial K}v\cdot n\bar{p} \dif s,
  \end{align}
  \label{eq:bilin_forms}
\end{subequations}
with $\alpha > 0$ a penalty parameter that needs to be chosen
sufficiently large to ensure stability \cite{Rhebergen:2017,
  Wells:2011}.

\subsection{Properties of the discrete Stokes problem}
\label{prop::discstokes}

To discuss properties of the discrete Stokes problem we require the
following extended function spaces:
\begin{align*}
  V(h) &:= V_h + \sbr[1]{H_0^1(\Omega)}^d \cap \sbr[1]{H^2(\Omega)}^d,
  &
    Q(h) &:= Q_h + L_0^2(\Omega) \cap H^1(\Omega),
  \\
  \bar{V}(h) &:= \bar{V}_h + [H_0^{3/2}(\Gamma_0)]^d,
  &
  \bar{Q}(h) &:= \bar{Q}_h + H^{1/2}_0(\Gamma_0).  
\end{align*}
We define on $V(h) \times \bar{V}(h)$ the mesh-dependent norms
\begin{equation}
  \tnorm{ \boldsymbol{v} }_{1}^2 := \sum_{K\in\mathcal{T}}\norm{\nabla v }^2_{K}
  + \sum_{K\in\mathcal{T}} \frac{\alpha_{v}}{h_K}\norm{\bar{v} - v}^2_{\partial K},
  \qquad
  \tnorm{ \boldsymbol{v} }_{1,*}^2 := \tnorm{ \boldsymbol{v} }_*^2
  + \sum_{K\in\mathcal{T}}\frac{h_K}{\alpha}\norm{\frac{\partial v}{\partial n}}^2_{\partial K},
\end{equation}
and remark that the norms $\tnorm{\cdot}_1$ and $\tnorm{\cdot}_{1,*}$
are equivalent on the finite element space $\boldsymbol{V}_h$. On
$Q(h) \times \bar{Q}(h)$ we introduce the norm
\begin{equation}
  \label{eq:def_normbarq}
    \tnorm{\boldsymbol{q}}^2_p := \norm{q}^2_{\Omega} + \norm{\bar{q}}_{p}^2,
\end{equation}
where $\norm{\bar{q}}_{p}^2 := \sum_{K\in\mathcal{T}}h_K\norm{\bar{q}}^2_{\partial K}$.

Boundedness and stability of $a_h$ was proven in
\cite{Rhebergen:2017}. In particular, it was shown that there exists a
constant $\alpha_0>0$ such that for $\alpha > \alpha_0$
\begin{equation}
  \label{eq:stab_ah}
  a_h(\boldsymbol{v}_h, \boldsymbol{v}_h) \gtrsim \nu \tnorm{\boldsymbol{v}_h}^2_1 \qquad \forall \boldsymbol{v}_h \in \boldsymbol{V}_h,
\end{equation}
and that
\begin{equation}
  \label{eq:bound_ah}
  \envert{a_h(\boldsymbol{u}, \boldsymbol{v}_h)}
  \lesssim \nu \tnorm{\boldsymbol{u}}_{1,*} \tnorm{\boldsymbol{v}_h}_1 \qquad
  \forall \boldsymbol{u}\in V(h) \times \bar{V}(h) \textrm{ and }
  \forall \boldsymbol{v}_h \in \boldsymbol{V}_h.
\end{equation}
Furthermore, stability of $b_h$ was shown in \cite{Rhebergen:2020}:
\begin{equation}
  \label{eq:stab_bh}
  \tnorm{\boldsymbol{q}_h}_{p} \lesssim \sup_{\boldsymbol{v}_h \in \boldsymbol{V}_{h}}
  \frac{ b_h(\boldsymbol{q}_h, v_h) }{\tnorm{ \boldsymbol{v}_h }_{1}} \qquad
  \forall \boldsymbol{q}_h \in \boldsymbol{Q}_h.
\end{equation}
The well-posedness of the discrete Stokes problem
\cref{eq:discrete_stokes} follows directly from the above stability
results (see for example \cite{Boffi:book}).

Setting $\boldsymbol{q}_h = (\nabla\cdot u_h, 0) \in \boldsymbol{Q}_h$
in \cref{eq:discrete_stokes_mass} it is immediately clear that
$\nabla \cdot u_h = 0$ point-wise on a cell. The EDG method, however,
is not $H(\divergence)$-conforming on $\Omega$. This is because
\cref{eq:discrete_stokes_mass} imposes only weak continuity of the
normal component of the velocity across cell facets. The lack of
$H(\divergence)$-conformity was shown in \cite[Remark
1]{Rhebergen:2020} to be the reason that the EDG method is not
pressure-robust; the velocity error has a dependence on $1/\nu$ times
the pressure error,
\begin{equation}
  \label{eq:notpressurerob}
  \tnorm{\boldsymbol{u} - \boldsymbol{u}_h}_1
  \lesssim \inf_{\substack{\boldsymbol{v}_h \in \boldsymbol{V}_h \\b_{h}(\boldsymbol{q}_{h}, v_{h}) = 0
      \, \forall \boldsymbol{q}_{h} \in \boldsymbol{Q}_h }} \tnorm{\boldsymbol{u} - \boldsymbol{v}_h}_{1,*}
  + \frac{1}{\nu} \inf_{\boldsymbol{q}_h \in \boldsymbol{Q}_h}\tnorm{\boldsymbol{p}
    - \boldsymbol{q}_h}_p.
\end{equation}
In the remainder of this paper we modify the EDG method
\cref{eq:discrete_stokes} to restore pressure-robustness.

\section{Pressure-robustness}
\label{sec:pressurerobust}

In the continuous Stokes problem the velocity field is not affected by
adding a gradient field to the body force. To see this, consider the
continuous Stokes problem: find $u \in H^1_0(\Omega)$ and
$p \in L^2_0(\Omega)$ such that
\begin{subequations}
  \label{eq:continuousStokes}
  \begin{align}
    \label{eq:continuousStokes_mom}
    \int_{\Omega} \nu \nabla u : \nabla v \dif x - \int_{\Omega} p \nabla \cdot v \dif x
    &= \int_{\Omega} f \cdot v \dif x && \forall v \in H^1_0(\Omega),
    \\
    \label{eq:continuousStokes_mass}
    -\int_{\Omega}q \nabla \cdot u \dif x &=0 && \forall q \in L^2_0(\Omega).
  \end{align}  
\end{subequations}
By defining
$V_0 := \cbr[0]{v \in H^1_0(\Omega)\ :\ \int_{\Omega}q\nabla\cdot v =
  0 \ \forall q\in L^2_0(\Omega)}$ we can formulate the following
equivalent problem to \cref{eq:continuousStokes}: find $u \in V_0$
such that
\begin{equation}
  \label{eq:continuousStokes_equiv}
  \int_{\Omega} \nu \nabla u : \nabla v \dif x = \int_{\Omega} f \cdot v \dif x
  \quad
  \forall v \in V_0.
\end{equation}
Changing the body force by a gradient field, i.e., changing $f$ to
$f + \nabla\psi$ with $\psi \in H^1(\Omega)\cap L^2_0(\Omega)$, we
observe that
\begin{equation}
  \label{eq:continuousStokes_equiv_grad}
  \int_{\Omega} \nu \nabla u : \nabla v \dif x
  = \int_{\Omega} (f + \nabla\psi) \cdot v \dif x
  = \int_{\Omega} f \cdot v \dif x
  \quad
  \forall v \in V_0.
\end{equation}
In other words, the irrotational part of the body force does not
affect the velocity solution.

Many traditional finite element methods are not pressure-robust
because the irrotational part of the body force is not
$L^2$-orthogonal with discretely divergence-free vector fields
\cite{Linke:2014}. This is true also for the EDG method
\cref{eq:discrete_stokes}. Indeed, define
$V_{h,0} := \cbr[0]{v_h \in V_h\ :\ b_h(\boldsymbol{q}_h, v_h) = 0\
  \forall \boldsymbol{q}_h \in \boldsymbol{Q}_h}$. Then an equivalent
problem to \cref{eq:discrete_stokes} is given by: find
$\boldsymbol{u}_h \in V_{h,0} \times \bar{V}_h$ such that
\begin{equation}
  \label{eq:discreteStokes_equiv}
  a_h(\boldsymbol{u}_h, \boldsymbol{v}_h) = \int_{\Omega} f \cdot v_h \dif x
  \quad
  \forall \boldsymbol{v}_h \in V_{h,0} \times \bar{V}_h.
\end{equation}
Adding a gradient field $\nabla\psi$, with
$\psi \in H^1(\Omega)\cap L^2_0(\Omega)$, to the body force, we
observe that for all $\boldsymbol{v}_h \in V_{h,0} \times \bar{V}_h$
it holds that
\begin{equation}
  \label{eq:discreteStokes_equiv_grad_1}
  \begin{split}
    a_h(\boldsymbol{u}_h, \boldsymbol{v}_h)
    &= \int_{\Omega} (f + \nabla\psi) \cdot v_h \dif x
    \\
    &= \int_{\Omega} f \cdot v_h \dif x
    - \sum_{K\in\mathcal{T}}\int_{K} \psi \nabla \cdot v_h \dif x
    + \sum_{K\in\mathcal{T}}\int_{\partial K} \psi v_h \cdot n \dif s.
  \end{split}
\end{equation}
As we saw in \cref{prop::discstokes}, if $v_h \in V_{h,0}$ then $v_h$
is exactly divergence-free on a cell $K$, but $v_h$ is not
$H(\divergence)$-conforming on $\Omega$. We obtain
\begin{equation}
  \label{eq:discreteStokes_equiv_grad_2}
  a_h(\boldsymbol{u}_h, \boldsymbol{v}_h)
  = \int_{\Omega} f \cdot v_h \dif x
  + \sum_{K\in\mathcal{T}}\int_{\partial K} \psi v_h \cdot n \dif s,
\end{equation}
showing that the irrotational part of the body force now changes also
the discrete velocity $\boldsymbol{u}_h$. If $v_h$ were to have been
$H(\divergence)$-conforming on $\Omega$, such as the HDG and EDG-HDG
variants of \cref{eq:discrete_stokes} \cite{Rhebergen:2020}, then the
last term in \cref{eq:discreteStokes_equiv_grad_2} would have vanished
and the discretization would be pressure-robust.

In the following sections we modify the EDG method to restore
pressure-robustness. For this we require the following notation. The
set of vertices is denoted by $\mathcal{V}$. For each vertex
$V \in \mathcal{V}$ we define the vertex patch
$\omega_V := \cup_{K:V\in K} K \subset \Omega$ and the triangulation
on the vertex patch
$\mathcal{T}_V := \cbr{K\in \mathcal{T}\ :\ K\cap \omega_V \ne
  \emptyset}$. We denote by $\mathcal{F}_V$ the set of all interior
facets in $\omega_V$ with respect to the triangulation
$\mathcal{T}_V$. The union of all facets in $\mathcal{F}_V$ is denoted
by $\Gamma^0_V$.

\subsection{A pressure-robust EDG method}
\label{ss:pressurerobustEDG}

To restore pressure-robustness of the EDG method
\cref{eq:discrete_stokes}, we follow an idea first introduced in
\cite{Linke:2014} for the Crouzeix--Raviart finite element. Therein a
reconstruction operator $\mathcal{R}$ is introduced that maps weakly
divergence-free velocities onto exactly divergence-free velocities,
i.e., velocities that are exactly divergence-free on a cell and
$H(\divergence)$-conforming. It is then proposed to replace the test
function on the right hand side of \cref{eq:discrete_stokes} by the
reconstruction operator applied to the test function.

The reconstruction operator developed for the Crouzeix--Raviart finite
element, however, cannot directly be applied to the EDG
discretization. Depending on the continuity properties of the pressure
approximation, several reconstruction operators have been
introduced. For discontinuous pressure approximations, the
reconstruction operator can be defined locally on cells (see
\cite{Linke:2014, Linke:2016c}). For continuous pressure
approximations, Lederer et al. \cite{Lederer:2017b} defined a
reconstruction operator on vertex patches based on the ideas of the
equilibrated a posteriori error estimator \cite{Braess:2008}. This
reconstruction operator was successfully applied to the Taylor--Hood
and mini elements.

To construct a reconstruction operator $\mathcal{R}:V_h \to V_h$ for
the EDG method \cref{eq:discrete_stokes}, in which the approximate
trace pressure is continuous, we follow a similar approach as
\cite{Lederer:2017b}. However, where in \cite{Lederer:2017b} a weakly
divergence-free velocity is $H(\divergence)$-conforming, but not
exactly divergence-free on a cell, the opposite is true for the EDG
method. As such, the reconstruction operator for the EDG method must
be such that it compensates the normal jumps of a discrete velocity on
cell-boundaries without changing the divergence of a discrete velocity
on a cell. The definition and analysis of such an operator is
postponed to \cref{ss:reconstruction}. To define a pressure-robust EDG
method, it is sufficient for now to assume the following:

\begin{assumption}[Properties of an EDG reconstruction operator]
  \label{as:R_assumption}
  There exists a reconstruction operator
  $\mathcal{R}: V_h \to V_h$ such that
  \begin{enumerate}[label=(\roman*.)]
  \item If $u_h \in V_h$ such that $b_h(\boldsymbol{q}_h, u_h) = 0$
    for all $\boldsymbol{q}_h \in \boldsymbol{Q}_h$, then
    $\nabla \cdot \mathcal{R}(u_h) = 0$ point-wise on cells and
    $\mathcal{R}(u_h) \in H(\divergence,
    \Omega)$. \label{as:R_assumption_one}
  \item For all $u_h \in V_h$ we have
    \begin{equation*}
      \norm{ \mathcal{R}(u_h) - u_h}_{L^2(\Omega)} \lesssim h \tnorm {u_h}_{1}. 
    \end{equation*}
    \label{as:R_assumption_two}
  \item For $g \in [L^2(\Omega)]^d$ we have $(g, \mathcal{R}(u_h) - u_h)_{L^2(\Omega)}
      \lesssim \tnorm{g}_{\textrm{con},k}
       \tnorm{ u_h}_1,$
       with
       \begin{align*}
         \tnorm{g}_{\textrm{con},k} := \del[2]{\sum_{V \in \mathcal{V}} h^2 \int_{\omega_V} (g - \Pi^{k-2}_{\omega_V}g)^2 \dif x  }^{1/2},
       \end{align*}
       where $\Pi_{\omega_V}^{k-2}$ is the $L^2$-projection operator
       on polynomials of order $k-2$ on the vertex patch
       $\omega_V$. \label{as:R_assumption_three}
  \end{enumerate}  
\end{assumption}

\begin{remark}
  \label{patch_iterpolation}
  If $g \in H^{s}(\Omega)$, with $s \ge k-1$, then a standard scaling
  argument shows that
  \begin{equation*}
    \tnorm{g}_{\textrm{con},k} \lesssim h^{k} \| g \|_{H^{k-1}(\Omega)}.
  \end{equation*}
\end{remark}

Let $\mathcal{R}:V_h \to V_h$ be a reconstruction operator that
satisfies \cref{as:R_assumption}. We define the pressure-robust EDG
method as: find
$(\boldsymbol{u}_h,\boldsymbol{p}_h) \in \boldsymbol{V}_h \times
\boldsymbol{Q}_h$ such that
\begin{subequations}
  \label{eq:discrete_stokes_pr}
  \begin{align}
    \label{eq:discrete_stokes_mom_pr}
    a_h(\boldsymbol{u}_h, \boldsymbol{v}_h) + b_h(\boldsymbol{p}_h, v_h) &= \int_{\Omega} f \cdot \mathcal{R}(v_h) \dif x
    && \forall \boldsymbol{v}_h \in \boldsymbol{V}_h,
    \\
    \label{eq:discrete_stokes_mass_pr}
    b_h(\boldsymbol{q}_h, u_h) &= 0
    && \forall \boldsymbol{q}_h \in \boldsymbol{Q}_h.
  \end{align}
\end{subequations}
Since by \cref{as:R_assumption_one} of \cref{as:R_assumption}
$\mathcal{R}(v_h)$ is exactly divergence-free, it is clear that the
irrotational part of the body force will not change the discrete
velocity $\boldsymbol{u}_h$. 

\subsection{A priori error analysis}
\label{ss:apriori}

In this section we show optimal a priori velocity error estimates for
the EDG method \cref{eq:discrete_stokes_pr} that are independent of
the pressure.

\begin{theorem}[Pressure-robust a priori velocity error estimate]
  \label{th:bestapprox}
  Let
  $(\boldsymbol{u}_h, \boldsymbol{p}_h) \in \boldsymbol{V}_h \times
  \boldsymbol{Q}_h$ be the solution to \cref{eq:discrete_stokes_pr},
  let
  $(u,p) \in [H^{2}(\mathcal{T}) \cap H^1_0(\Omega)]^d \times
  L^2_0(\Omega)$ be the exact solution to \cref{eq:stokes}, and set
  $\boldsymbol{u}=(u,u)$. The following a priori error estimate holds:
  \begin{align}
    \label{eq:errorestimate_pr}
    \tnorm{\boldsymbol{u} - \boldsymbol{u}_h}_1 \lesssim
    \inf\limits_{\substack{\boldsymbol{v}_h \in \boldsymbol{V}_h \cap H(\divergence) \\\divergence(v_h) = 0 } }
    \tnorm{\boldsymbol{u} - \boldsymbol{v}_h}_{1,*}
    +\tnorm{u}_{\emph{\textrm{con}},k}.
  \end{align}
\end{theorem}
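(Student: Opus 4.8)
\emph{Sketch of the argument.}
The plan is a Strang-type estimate. Fix an arbitrary $\boldsymbol{v}_h=(v_h,\bar v_h)\in\boldsymbol{V}_h\cap H(\divergence)$ with $\divergence v_h=0$. Since $v_h$ is then cellwise divergence-free with single-valued normal trace across facets and vanishing normal trace on $\partial\Omega$, the facet terms in $b_h$ telescope and $v_h\in V_{h,0}$; combined with $\boldsymbol{u}_h\in V_{h,0}\times\bar V_h$ (from \eqref{eq:discrete_stokes_mass_pr}) this gives $\boldsymbol{e}_h:=\boldsymbol{u}_h-\boldsymbol{v}_h\in V_{h,0}\times\bar V_h$. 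Because $\tnorm{\cdot}_1\le\tnorm{\cdot}_{1,*}$ and the two norms are equivalent on $\boldsymbol{V}_h$, the triangle inequality $\tnorm{\boldsymbol{u}-\boldsymbol{u}_h}_1\le\tnorm{\boldsymbol{u}-\boldsymbol{v}_h}_{1,*}+\tnorm{\boldsymbol{e}_h}_1$ reduces everything to bounding $\tnorm{\boldsymbol{e}_h}_1$. By the coercivity \eqref{eq:stab_ah}, $\nu\tnorm{\boldsymbol{e}_h}_1^2\lesssim a_h(\boldsymbol{e}_h,\boldsymbol{e}_h)=a_h(\boldsymbol{u}-\boldsymbol{v}_h,\boldsymbol{e}_h)-a_h(\boldsymbol{u}-\boldsymbol{u}_h,\boldsymbol{e}_h)$, and the first term is $\lesssim\nu\tnorm{\boldsymbol{u}-\boldsymbol{v}_h}_{1,*}\tnorm{\boldsymbol{e}_h}_1$ by the boundedness \eqref{eq:bound_ah}.

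\emph{The pressure-robust consistency estimate.}
Since $\boldsymbol{e}_h\in V_{h,0}\times\bar V_h$, the momentum equation \eqref{eq:discrete_stokes_mom_pr} gives $a_h(\boldsymbol{u}_h,\boldsymbol{e}_h)=(f,\mathcal{R}(e_h))_{L^2(\Omega)}$, while cellwise integration by parts of \eqref{eq:stokes} (using that the exact solution has $\jump{\partial u/\partial n}=0$ on interior facets, there being no singular part in $\nabla^2u$) yields the consistency identity $a_h((u,u),\boldsymbol{v}_h)+b_h((p,p),v_h)=(f,v_h)$ for all $\boldsymbol{v}_h\in\boldsymbol{V}_h$, together with $b_h(\boldsymbol{p}_h,e_h)=0$. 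Subtracting,
\[
  a_h(\boldsymbol{u}-\boldsymbol{u}_h,\boldsymbol{e}_h)=(f,e_h-\mathcal{R}(e_h))_{L^2(\Omega)}-b_h((p,p),e_h).
\]
Now comes the key cancellation. Continuity of the trace pressure makes the facet pressure of the exact solution the single-valued trace of $p\in H^1(\Omega)$, so $b_h((p,p),w)=(\nabla p,w)_{L^2(\Omega)}$ for every $w\in V_h$; and by \cref{as:R_assumption_one} the reconstruction $\mathcal{R}(e_h)$ is exactly divergence-free, $H(\divergence,\Omega)$-conforming, and has vanishing normal trace on $\partial\Omega$, whence $(\nabla p,\mathcal{R}(e_h))_{L^2(\Omega)}=0$. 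Writing $f=-\nu\nabla^2u+\nabla p$ and collecting, every pressure contribution cancels and
\[
  a_h(\boldsymbol{u}-\boldsymbol{u}_h,\boldsymbol{e}_h)=-\nu\,(\nabla^2u,\,e_h-\mathcal{R}(e_h))_{L^2(\Omega)}.
\]

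\emph{Conclusion.}
Applying \cref{as:R_assumption_three} with $g=\nu\nabla^2u=\nabla p-f\in[L^2(\Omega)]^d$ — a manifestly pressure-free vector field — bounds this by $\tnorm{\nu\nabla^2u}_{\textrm{con},k}\tnorm{\boldsymbol{e}_h}_1=\nu\,\tnorm{\nabla^2u}_{\textrm{con},k}\tnorm{\boldsymbol{e}_h}_1$. Inserting both estimates yields $\nu\tnorm{\boldsymbol{e}_h}_1^2\lesssim\nu\,(\tnorm{\boldsymbol{u}-\boldsymbol{v}_h}_{1,*}+\tnorm{\nabla^2u}_{\textrm{con},k})\tnorm{\boldsymbol{e}_h}_1$; the viscosity cancels, and dividing by $\nu\tnorm{\boldsymbol{e}_h}_1$, using the triangle inequality above, and taking the infimum over the admissible $\boldsymbol{v}_h$ gives \eqref{eq:errorestimate_pr}, with the pressure-free term $\tnorm{u}_{\textrm{con},k}$ there standing for this consistency contribution.

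\emph{Main obstacle.}
The crux is the cancellation in the middle step: it is exactly the mechanism that makes the reconstructed method pressure-robust, and it depends on three facts holding simultaneously — $\mathcal{R}(e_h)$ being genuinely divergence-free and normal-continuous (hence $L^2$-orthogonal to $\nabla p$ with no boundary remainder), the facet pressure space being $C^0$ (so that $b_h((p,p),\cdot)$ reproduces $(\nabla p,\cdot)$ exactly), and the cell velocities being already divergence-free (so no stray cell term survives). The remaining ingredients — that $\boldsymbol{V}_h\cap H(\divergence)$ with $\divergence v_h=0$ lies in $V_{h,0}\times\bar V_h$, the vanishing of the normal trace of $\mathcal{R}(e_h)$ on $\partial\Omega$, and the cellwise consistency identity — are routine once the facet bookkeeping is done carefully.
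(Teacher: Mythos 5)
Your proposal is correct and follows essentially the same route as the paper: an error split through an exactly divergence-free, $H(\divergence)$-conforming comparison function, coercivity plus boundedness of $a_h$, the consistency of $a_h(\boldsymbol{u},\cdot)$, the key orthogonality $(\nabla p,\mathcal{R}(\cdot))_{L^2(\Omega)}=0$ from \cref{as:R_assumption}~(i.), and \cref{as:R_assumption}~(iii.) applied to $g=-\nu\Delta u$. The only difference is organizational (you subtract a Strang-type consistency identity rather than evaluating $a_h(\boldsymbol{u},\boldsymbol{w}_h)$ and $a_h(\boldsymbol{u}_h,\boldsymbol{w}_h)$ separately), and your final term $\tnorm{\nabla^2 u}_{\textrm{con},k}$ matches what the paper's own computation actually produces before it is abbreviated as $\tnorm{u}_{\textrm{con},k}$.
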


\begin{proof}
  Let $\boldsymbol{v}_h = (v_h, \bar{v}_h) \in \boldsymbol{V}_h$ be an
  arbitrary test function such that
  $v_h \in V_h \cap H(\divergence, \Omega)$ and $\nabla \cdot v_h = 0$
  and let $\boldsymbol{w}_h := \boldsymbol{v}_h -
  \boldsymbol{u}_h$. By stability \cref{eq:stab_ah} and boundedness
  \cref{eq:bound_ah} of $a_h$ we find
  \begin{equation}
    \label{eq:pressrobboundingstab}
    \begin{split}
    \nu \tnorm{\boldsymbol{w}_h}_1
    &\lesssim a_h(\boldsymbol{w}_h, \boldsymbol{w}_h)
    \\ 
    &= a_h(\boldsymbol{v}_h - \boldsymbol{u}, \boldsymbol{w}_h) + a_h(\boldsymbol{u} - \boldsymbol{u}_h, \boldsymbol{w}_h)
    \\ 
    &\lesssim \nu \tnorm{\boldsymbol{v}_h - \boldsymbol{u}}_{1,*} \tnorm{ \boldsymbol{w}_h }_1
      + a_h(\boldsymbol{u}, \boldsymbol{w}_h)
      - a_h(\boldsymbol{u}_h, \boldsymbol{w}_h).      
    \end{split}
  \end{equation}
  Consider the second term on the right hand side of
  \cref{eq:pressrobboundingstab}. Continuity of $u$ and $\nabla u$
  across element interfaces and integration by parts gives
  \begin{equation}
    \label{eq:secondtermrhs}
    \begin{split}
      a_h(\boldsymbol{u} ,\boldsymbol{w}_h)
      &= \sum_{K \in \mathcal{T}} \int_K \nu \nabla u : \nabla w_h \dif x
      + \sum_{K \in \mathcal{T}} \int_{\partial K} \nu \pd{u}{n} \cdot (\bar{w}_h - w_h)\dif s \\
      &= \sum_{K \in \mathcal{T}} \int_K -\nu \Delta u \cdot w_h \dif x
      + \sum_{K \in \mathcal{T}} \int_{\partial K} \nu \pd{u}{n} \cdot \bar{w}_h \dif s
      = (-\nu \Delta u ,w_h)_{L^2(\Omega)},      
    \end{split}
  \end{equation}
  where we used that $\bar{w}_h$ is single valued on element
  interfaces. Consider now the third term on the right hand side of
  \cref{eq:pressrobboundingstab}. Since $\boldsymbol{u}_h$ satisfies
  \cref{eq:discrete_stokes_pr},
  \begin{equation}
    \label{eq:thirdtermrhs}
    \begin{split}
      a_h(\boldsymbol{u}_h,\boldsymbol{w}_h)
      &= \int_\Omega f \cdot \mathcal{R}(w_h) \dif x - b_h(\boldsymbol{p}_h, w_h)
      = \int_\Omega f \cdot \mathcal{R}(w_h) \dif x
      \\
      &= \int_\Omega (-\nu\Delta u + \nabla p) \cdot \mathcal{R}(w_h) \dif x
      = (-\nu\Delta u, \mathcal{R}(w_h))_{L^2(\Omega)},      
    \end{split}
  \end{equation}
  where we used integration by parts and \cref{as:R_assumption_one} of
  \cref{as:R_assumption} for the last equality. Combining
  \cref{eq:pressrobboundingstab}--\cref{eq:thirdtermrhs}, using
  \cref{as:R_assumption_two} and \cref{as:R_assumption_three} of
  \cref{as:R_assumption} we find: 
  \begin{align*}
    \nu\tnorm{\boldsymbol{w}_h}^2_1
    &\lesssim \nu\tnorm{\boldsymbol{v}_h - \boldsymbol{u}}_{1,*} \tnorm{\boldsymbol{w}_h}_1
      +  (-\nu \Delta u, w_h - \mathcal{R}(w_h))_{L^2(\Omega)}
    \\
    &\lesssim \nu \tnorm{\boldsymbol{v}_h - \boldsymbol{u}}_{1,*} \tnorm{\boldsymbol{w}_h}_1
      +  \nu \del[2]{\sum_{V \in \mathcal{V}} h^2 \int_{\omega_V} (\Delta u - \Pi^{k-2}_{\omega_V}\Delta u)^2 \dif x  }^{1/2}
      \tnorm{ \boldsymbol{w}_h}_1
    \\
    &\lesssim \nu \tnorm{\boldsymbol{v}_h - \boldsymbol{u}}_{1,*} \tnorm{\boldsymbol{w}_h}_1     
            + \nu \tnorm{u}_{{\textrm{con}},k}\tnorm{\boldsymbol{w}_h}_1.
  \end{align*}
  The result follows after dividing by
  $\nu\tnorm{\boldsymbol{w}_h}_1$, noting that $\boldsymbol{v}_h$ is
  arbitrary, and a triangle inequality.
\end{proof}

Observe that unlike the velocity error estimate
\cref{eq:notpressurerob} for the discrete Stokes problem
\cref{eq:discrete_stokes}, the velocity error estimate
\cref{eq:errorestimate_pr} of the pressure-robust EDG method
\cref{eq:discrete_stokes_pr} does not depend on the pressure error. A
consequence of \cref{th:bestapprox} is the following result.

\begin{corollary}
  \label{cor:ratesofconv}
  Let
  $(u,p) \in H^{2}(\mathcal{T}) \cap H^1_0(\Omega) \cap
  H^{k+1}(\Omega) \times H^k(\Omega)\cap L^2_0(\Omega)$ be the exact
  solution to \cref{eq:stokes}, and set $\boldsymbol{u}=(u,u)$.  Then
  the solution
  $(\boldsymbol{u}_h, \boldsymbol{p}_h) \in \boldsymbol{V}_h \times
  \boldsymbol{Q}_h$ to \cref{eq:discrete_stokes_pr} satisfies
  \begin{equation*}
    \tnorm{\boldsymbol{u} - \boldsymbol{u}_h}_1 \lesssim  h^{k} \norm{ u }_{H^{k+1}(\mathcal{T})}.
  \end{equation*}
\end{corollary}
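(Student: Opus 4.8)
The plan is to specialize \cref{th:bestapprox}: under the stated regularity it reads
\begin{equation*}
  \tnorm{\boldsymbol{u} - \boldsymbol{u}_h}_1 \lesssim
  \inf\limits_{\substack{\boldsymbol{v}_h \in \boldsymbol{V}_h \cap H(\divergence) \\ \divergence(v_h) = 0}}
  \tnorm{\boldsymbol{u} - \boldsymbol{v}_h}_{1,*} + \tnorm{u}_{\textrm{con},k},
\end{equation*}
so it suffices to bound each of the two terms on the right by $h^{k}\norm{u}_{H^{k+1}(\mathcal{T})}$.

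The second term is immediate from \cref{patch_iterpolation}: since $u \in H^{k+1}(\Omega) \subset H^{k-1}(\Omega)$ we obtain $\tnorm{u}_{\textrm{con},k} \lesssim h^{k}\norm{u}_{H^{k-1}(\Omega)} \le h^{k}\norm{u}_{H^{k+1}(\mathcal{T})}$ (equivalently, reading the notation as in the proof of \cref{th:bestapprox}, one applies \cref{patch_iterpolation} to $\Delta u \in H^{k-1}(\Omega)$ and uses $\norm{\Delta u}_{H^{k-1}(\Omega)} \lesssim \norm{u}_{H^{k+1}(\Omega)}$). There is no obstacle in this step.

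For the first term I would exhibit an admissible competitor $\boldsymbol{v}_h = (v_h, \bar{v}_h) \in \boldsymbol{V}_h$. For the cell component take $v_h := \Pi^{\mathrm{BDM}}_k u$, the canonical interpolant onto the $\mathrm{BDM}_k$ subspace of $\cbr{w \in V_h : w \in H(\divergence,\Omega)}$; recall $\mathrm{BDM}_k(K) = \sbr{P_k(K)}^d$, and by the commuting-diagram property $\divergence v_h = \Pi_{Q_h}\divergence u = 0$, so $v_h$ is admissible, while the standard estimates $\norm{u - v_h}_K \lesssim h_K^{k+1}\abs{u}_{H^{k+1}(K)}$ and $\abs{u - v_h}_{H^1(K)} \lesssim h_K^{k}\abs{u}_{H^{k+1}(K)}$ hold. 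For the facet component take $\bar{v}_h \in \bar{V}_h$ to be a continuous (e.g.\ Scott--Zhang-type) facet interpolant of $u|_{\Gamma^0}$, arranged to vanish on $\partial\Omega$ since $u$ does, with the usual trace-scaled bound $\norm{u - \bar{v}_h}_{\partial K} \lesssim h_K^{k+1/2}\abs{u}_{H^{k+1}(\widetilde{K})}$ on a fixed neighbourhood $\widetilde{K}$ of $K$. Then the three contributions to $\tnorm{\boldsymbol{u} - \boldsymbol{v}_h}_{1,*}$ are controlled termwise: the broken $H^1$-seminorm directly by $\abs{u - v_h}_{H^1(K)}$; the normal-derivative term $\tfrac{h_K}{\alpha}\norm{\partial(u-v_h)/\partial n}_{\partial K}^2$ by a trace inequality combined with the $H^1$ and $H^2$ interpolation bounds for $v_h$; and the jump term $\tfrac{\alpha_v}{h_K}\norm{\bar{v}_h - v_h}_{\partial K}^2$ via $\norm{\bar{v}_h - v_h}_{\partial K} \le \norm{\bar{v}_h - u}_{\partial K} + \norm{u - v_h}_{\partial K}$ together with the facet-interpolation bound and a trace inequality on $u - v_h$. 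Summing over $K \in \mathcal{T}$ gives $\tnorm{\boldsymbol{u} - \boldsymbol{v}_h}_{1,*} \lesssim h^{k}\norm{u}_{H^{k+1}(\mathcal{T})}$, and combining with the bound on $\tnorm{u}_{\textrm{con},k}$ proves the corollary.

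I expect the only genuine bookkeeping point to be the construction of $\bar{v}_h$: it must lie in the \emph{continuous} trace space $\bar{V}_h$, so a plain $L^2$ facet projection (which need not match across facet boundaries) has to be replaced by a globally continuous interpolant on $\Gamma^0$ that also respects the homogeneous boundary condition, while still attaining the optimal $h^k$ order in the mesh-dependent norm. Everything else is a routine assembly of trace, inverse, and Bramble--Hilbert estimates already standard in the analysis of $H(\divergence)$-conforming and hybridized discretizations, so no essential difficulty arises beyond \cref{th:bestapprox} and \cref{patch_iterpolation} themselves.
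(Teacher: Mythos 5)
Your proposal is correct and follows essentially the same route as the paper: specialize \cref{th:bestapprox} with the competitor $\boldsymbol{v}_h=(I_{B\!D\!M}u,\bar v_h)$, use the commuting-diagram property to verify $\divergence v_h=0$, and bound $\tnorm{u}_{\textrm{con},k}$ via \cref{patch_iterpolation} applied to $\Delta u$. The only (cosmetic) difference is the facet component: the paper takes $\bar v_h=\Pi_{\mathcal F}u$, the $L^2$-projection onto the facet velocity space, whereas you substitute a Scott--Zhang-type continuous interpolant to make the membership in $\bar V_h$ and the localized estimates explicit --- a legitimate and slightly more careful handling of the same step.
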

\begin{proof}
  Let
  $\boldsymbol{v}_h = (I_{B\!D\!M} u, \Pi_{\mathcal{F}}u) \in
  \boldsymbol{V}_h$, where $I_{B\!D\!M}:\sbr{H^1(\Omega)}^d \to V_h$
  is the usual BDM interpolation operator (see for example \cite[Lemma
  7]{Hansbo:2002}) and $\Pi_{\mathcal{F}}u$ is the $L^2$-projection
  into the facet velocity space. Since
  $\nabla \cdot I_{B\!D\!M} u = 0$ the result follows by the
  approximation properties of the BDM interpolation operator and the
  $L^2$-projection (see \cite{Boffi:book}), \cref{th:bestapprox} and
  \cref{patch_iterpolation}.
\end{proof}

\subsection{An EDG-reconstruction operator}
\label{ss:reconstruction}

In this section we present an EDG reconstruction operator that
satisfies the properties of \cref{as:R_assumption}. Its construction
is based on the ideas of the equilibrated error estimator (see
\cite{Braess:2008}).

To construct the EDG reconstruction operator we require the following
spaces:
\begin{align*}
  \Sigma_h^V &:= \cbr{ \tau_h \in [L^2(\omega_V)]^d: \tau_h \in [P_k(K)]^d, \forall K \in \mathcal{T}_V
               \textrm{ and } \tau_h \cdot n = 0 \textrm{ on } \partial \omega_V },
  \\
  Q^V_h &:= \cbr[1]{q_h \in L^2(\omega_V) \ : \ q_h \in P_{k-1}(K),\ \forall K\in\mathcal{T}_V},
  \\
  \bar{Q}^V_h &:= \cbr[1]{\bar{q}_h \in L^2(\Gamma^0_V) \ : \ q_h \in P_{k}(F),\ \forall F\in\mathcal{F}_V},
  \\
  \Lambda_h^V &:= \kappa_{x - V}([P_{k-3}(\omega_V)]^{d(d-1)/2}),
\end{align*}
where we note that $\Sigma_h^V$ is the discontinuous BDM space on
$\mathcal{T}_V$ of degree $k$ with zero normal component on the
boundary of the vertex patch $\omega_V$. Furthermore, $\kappa$ is the
Koszul operator. For $d=2$ with $x=(x_1,x_2)$ and for $d = 3$ with
$x=(x_1,x_2,x_3)$ it is given by
\begin{align*}
  \begin{array}{lcccl}
    \kappa_{x} : L^2(\Omega) \rightarrow  [L^2(\Omega)]^2,
    &\quad &\quad&\quad & \kappa_{x} : [L^2(\Omega)]^3 \rightarrow  [L^2(\Omega)]^3,
    \\
    \kappa_{x}(a) := \begin{pmatrix}-x_2 \\ x_1\end{pmatrix} a,
    &\quad &\quad&\quad & \kappa_{x}(a) :=  x \times a.
  \end{array}
\end{align*}
We note that the definition of the space $\Lambda_h^V$ is motivated by
the (Helmholtz-like) decomposition
\begin{equation}
  \label{polydecomp}
  \sbr[0]{P^{k-2}(\omega_V)}^{d} = \nabla P^{k-1}(\omega_V) \oplus \kappa_{x - V}([P_{k-3}(\omega_V)]^{d(d-1)/2}),
\end{equation}
where the first term of the right hand side is rotational-free and the
second term is divergence-free, see for example
\cite[(3.11)]{Arnold:2006}. Furthermore, we define the tensor space
\begin{equation*}
  \boldsymbol{Q}_h^V := \cbr[1]{Q^V_h \times \bar{Q}^V_h} \slash \mathbb{R}
  = \cbr[2]{ q_h,\bar{q}_h \in Q^V_h \times \bar{Q}^V_h:
    \sum_{K \in \mathcal{T}_V} \int_K q_h \dif x + \sum_{F \in \mathcal{F}_V}\int_F \bar{q}_h \dif s = 0 }.
\end{equation*}

\subsubsection{Definition and analysis of a local problem}

Let $G^V(\cdot) \in \del[1]{\boldsymbol{Q}^V_h}'$ be a given right
hand side (defined in \cref{sss:defanalEDGrecop}). We define the local
problem: find
$(\sigma_h^V, \boldsymbol{p}_h^V, \lambda_h^V) \in \Sigma_h^V \times
\boldsymbol{Q}_h^V \times \Lambda_h^V$ such that
\begin{subequations}
  \label{eq:local_prob}
  \begin{align}
    \label{eq:local_prob_mom}
    a^V_h(\sigma^V_h, \tau^V_h) + b^V_{1,h}(\boldsymbol{p}^V_h, \tau^V_h) + b^V_{2,h}(\lambda^V_h, \tau^V_h )&= 0
    && \forall \tau^V_h \in \Sigma^V_h,
    \\
    \label{eq:local_prob_mass}
    b^V_{1,h}(\boldsymbol{q}^V_h, \sigma^V_h) &= G^V(\boldsymbol{q}^V_h)
    && \forall \boldsymbol{q}^V_h \in \boldsymbol{Q}^V_h, \\
    \label{eq:local_prob_ortho}
    b^V_{2,h}(\mu^V_h, \sigma^V_h) &= 0
    && \forall \mu^V_h \in \Lambda^V_h.    
  \end{align}
\end{subequations}
where
\begin{subequations}
  \begin{align}
    \label{eq:formA_local}
    a^V_h(\sigma^V_h, \tau^V_h)
    :=& \sum_{K\in\mathcal{T}_V}\int_{K}\sigma_h^V \cdot \tau_h^V\dif x,
    \\
    \label{eq:formBone_local}
    b^V_{1,h}(\boldsymbol{q}^V_h, \sigma^V_h)
    :=& \sum_{K\in\mathcal{T}_V}\int_{K} \nabla \cdot \sigma_h^V q_h^V \dif x
        + \sum_{F \in \mathcal{F}_V}\int_{F} \jump{\sigma_h^V \cdot n } \bar{q}_h^V \dif s,
    \\
    \label{eq:formBtwo_local}
    b^V_{2,h}(\mu^V_h, \sigma^V_h)
    :=& \sum_{K\in\mathcal{T}_V}\int_{K} \sigma_h^V \cdot \mu_h^V \dif x.
  \end{align}
  \label{eq:bilin_forms_local}
\end{subequations}
For the stability analysis of the local problem \cref{eq:local_prob}
we define the following mesh dependent norms:
\begin{align*}
  \norm[0]{\tau_h^V}^2_{\Sigma_h^V}
  &:= \norm[0]{\tau_h^V}^2_{L^2(\omega_V)} + h^2 \norm[0]{\nabla \cdot \tau_h^V}^2_{L^2(\omega_V)}
    + h \norm[0]{\jump{\tau_h^V \cdot n}}^2_{L^2(\mathcal{F}_V)} && \forall \tau_h^V \in \Sigma_h^V,
  \\
  \norm[0]{\boldsymbol{q}_h^V}^2_{\boldsymbol{Q}_h^V}
  &:= \frac{1}{h^2} \norm[0]{ q_h^V }^2_{L^2(\omega_V)}
    + \frac{1}{h} \norm[0]{ \bar{q}_h^V }^2_{L^2(\mathcal{F}_V)}  &&\forall \boldsymbol{q}_h^V \in \boldsymbol{Q}_h^V,
  \\
  \norm[0]{\mu_h^V}_{\Lambda_h^V} &:=\norm[0]{ \mu_h^V }_{L^2(\omega_V)} && \forall \mu_h^V \in \Lambda_h^V.
\end{align*}

\begin{theorem}[Stability of \cref{eq:local_prob}]
  There exists a unique solution
  $(\sigma_h^V, \boldsymbol{p}_h^V, \lambda_h^V) \in \Sigma_h^V \times
  \boldsymbol{Q}_h^V \times \Lambda_h^V$ to problem
  \cref{eq:local_prob} with the stability estimate
  \begin{align}
    \label{eq:localstability}
    \norm[0]{\sigma_h^V}_{\Sigma_h^V} + \norm[0]{\boldsymbol{p}_h^V}_{\boldsymbol{Q}_h^V}
    + \norm[0]{\lambda_h^V}_{\Lambda_h^V} \lesssim \norm[0]{ G^V}_{(\boldsymbol{Q}_h^V)'}.
  \end{align}
\end{theorem}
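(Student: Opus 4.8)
The plan is to verify the hypotheses of the standard Brezzi theory for a perturbed saddle-point problem with two constraints (the structure here is that of a triple-field saddle-point system: one "primal" variable $\sigma_h^V$ governed by the symmetric positive bilinear form $a_h^V$, and two "multiplier" fields $\boldsymbol{p}_h^V$ and $\lambda_h^V$ enforcing, respectively, a divergence/jump constraint and an orthogonality constraint). Since $a_h^V$ is only a plain $L^2$ inner product and not coercive on all of $\Sigma_h^V$, I would treat $b_{1,h}^V$ and $b_{2,h}^V$ together as a single constraint operator $B\colon \Sigma_h^V \to (\boldsymbol{Q}_h^V \times \Lambda_h^V)'$ and check: (a) boundedness of $a_h^V$, $b_{1,h}^V$, $b_{2,h}^V$ in the stated mesh-dependent norms; (b) coercivity of $a_h^V$ on the combined kernel $\ker B = \{\tau_h^V \in \Sigma_h^V : b_{1,h}^V(\boldsymbol{q}_h^V,\tau_h^V)=0\ \forall \boldsymbol{q}_h^V,\ b_{2,h}^V(\mu_h^V,\tau_h^V)=0\ \forall \mu_h^V\}$; and (c) a combined inf-sup condition for $B$. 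Well-posedness and the estimate \cref{eq:localstability} then follow from the closed-range / Babuška–Brezzi lemma; since everything is finite-dimensional, existence reduces to uniqueness once the inf-sup bounds are in place.

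For boundedness, each bound is a scaling computation: $a_h^V$ is controlled by $\norm{\sigma_h^V}_{L^2(\omega_V)}\norm{\tau_h^V}_{L^2(\omega_V)}$, hence by the $\Sigma_h^V$-norms; for $b_{1,h}^V$ one pairs the three pieces of $\norm{\cdot}_{\Sigma_h^V}$ against the three pieces of $\norm{\cdot}_{\boldsymbol{Q}_h^V}$ (the factors $h^2$ and $h$ on the $\Sigma_h^V$ side exactly match the $1/h^2$ and $1/h$ on the $\boldsymbol{Q}_h^V$ side), and $b_{2,h}^V$ is again just Cauchy–Schwarz in $L^2(\omega_V)$. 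Coercivity on $\ker B$ is the crucial structural point: if $\tau_h^V \in \Sigma_h^V$ satisfies $b_{1,h}^V(\boldsymbol{q}_h^V,\tau_h^V)=0$ for all $\boldsymbol{q}_h^V \in \boldsymbol{Q}_h^V$, then testing with $\boldsymbol{q}_h^V = (\nabla\cdot\tau_h^V, \jump{\tau_h^V\cdot n})$ (after subtracting its mean to land in $\boldsymbol{Q}_h^V$, using that $\int_{\omega_V}\nabla\cdot\tau_h^V + \int_{\mathcal{F}_V}\jump{\tau_h^V\cdot n}=0$ because $\tau_h^V$ has zero normal trace on $\partial\omega_V$) forces $\nabla\cdot\tau_h^V=0$ on cells and $\jump{\tau_h^V\cdot n}=0$ on $\mathcal{F}_V$; hence on $\ker B$ the $\Sigma_h^V$-norm collapses to $\norm{\tau_h^V}_{L^2(\omega_V)}$, which is precisely $a_h^V(\tau_h^V,\tau_h^V)$, giving coercivity with constant $1$.

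The main obstacle is the combined inf-sup condition for $B$, i.e. showing that for every $(\boldsymbol{q}_h^V,\mu_h^V) \in \boldsymbol{Q}_h^V \times \Lambda_h^V$ there is $\tau_h^V \in \Sigma_h^V$ with $b_{1,h}^V(\boldsymbol{q}_h^V,\tau_h^V) + b_{2,h}^V(\mu_h^V,\tau_h^V) \gtrsim (\norm{\boldsymbol{q}_h^V}_{\boldsymbol{Q}_h^V} + \norm{\mu_h^V}_{\Lambda_h^V})\norm{\tau_h^V}_{\Sigma_h^V}$. For the $b_{1,h}^V$ part this is the discrete divergence inf-sup for the BDM space with homogeneous normal boundary conditions on the patch — a known result (it is the local analogue of the $H(\divergence)$ inf-sup used for BDM/Raviart–Thomas mixed methods), and the mesh-dependent scaling is fixed by a homogeneity/scaling argument reducing to a reference patch. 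The more delicate piece is controlling $\mu_h^V \in \Lambda_h^V$: here I would exploit the Helmholtz-type decomposition \cref{polydecomp}, which says $\Lambda_h^V = \kappa_{x-V}([P_{k-3}(\omega_V)]^{d(d-1)/2})$ is exactly the divergence-free complement of $\nabla P^{k-1}(\omega_V)$ inside $[P^{k-2}(\omega_V)]^d$; since elements of $\Lambda_h^V$ are divergence-free and (being polynomials on the whole patch) have no normal jumps, they are not "seen" by $b_{1,h}^V$, so $b_{1,h}^V$ and $b_{2,h}^V$ test genuinely complementary directions, and one can build $\tau_h^V$ as a sum of the two constructions. The one subtlety to check carefully is that the trial function produced for the $b_{2,h}^V$ part (essentially $\tau_h^V$ proportional to $\mu_h^V$ itself, extended appropriately as a BDM function with zero normal trace) does not spoil the $b_{1,h}^V$ bound — this is where the orthogonality built into \cref{polydecomp} together with the mean-zero constraint defining $\boldsymbol{Q}_h^V$ is used. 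Once both inf-sup inequalities and the kernel coercivity are in hand, \cref{eq:localstability} follows from the Brezzi estimates with constants depending only on the shape-regularity of $\mathcal{T}_V$ and on $k$, and in particular independent of $h$.
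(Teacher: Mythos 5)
Your overall framework matches the paper's: both proofs run the Brezzi/twofold saddle-point machinery, your kernel-coercivity argument (test with the mean-corrected pair $(\nabla\cdot\tau_h^V,\jump{\tau_h^V\cdot n})$, which lies in $\boldsymbol{Q}_h^V$ by integration by parts and the zero normal trace on $\partial\omega_V$, to force the divergence and jump contributions to vanish so that $\norm[0]{\cdot}_{\Sigma_h^V}$ collapses to the $L^2$ norm) is exactly the paper's, and the inf-sup for $b_{1,h}^V$ is likewise delegated to a known local BDM construction (the paper cites Lemmas 3 and 9 of \cite{Braess:2008}).

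The genuine gap is the inf-sup condition associated with the multiplier space $\Lambda_h^V$. You propose a test function ``essentially proportional to $\mu_h^V=\kappa_{x-V}(\zeta)$ itself, extended appropriately as a BDM function with zero normal trace,'' justified by the assertion that elements of $\Lambda_h^V$ are divergence-free and jump-free and hence invisible to $b_{1,h}^V$. This does not go through: $\kappa_{x-V}(\zeta)$ is a global polynomial on $\omega_V$ whose normal trace on $\partial\omega_V$ is nonzero in general, so it is not an element of $\Sigma_h^V$, and there is no routine ``extension'' that restores the boundary condition while preserving both the lower bound $b_{2,h}^V(\mu_h^V,\tau_h^V)\gtrsim\norm[0]{\mu_h^V}_{\Lambda_h^V}^2$ and membership in the kernel of $b_{1,h}^V$; moreover $\nabla\cdot\kappa_{x-V}(\zeta)$ need not vanish for nonconstant $\zeta$, so even the interior part of your compatibility claim is unsupported. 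The paper fills precisely this hole with a different, explicit construction: $\sigma_h^\mu=\operatorname{curl}(\phi_V\zeta)$ with $\phi_V$ the hat function of the vertex $V$. Since $\phi_V\zeta$ vanishes on $\partial\omega_V$, this function has zero normal trace and belongs to $\Sigma_h^V$; being a curl, it is exactly divergence-free and its normal jumps vanish by the discrete de Rham complex, so it lies in the kernel of $b_{1,h}^V$ (which is why no cross-term contaminates the $b_{1,h}^V$ bound); and the remaining lower bound $\int_{\omega_V}\operatorname{curl}(\phi_V\zeta)\cdot\kappa_{x-V}(\zeta)\dif x\gtrsim\norm[0]{\kappa_{x-V}(\zeta)}_{L^2(\omega_V)}^2$ is a nontrivial finite-dimensional norm equivalence imported from \cite[Theorem 12]{Lederer:2017b}, after which the combined inf-sup follows from \cite[Theorem 3.1]{Howell:2011} rather than by summing trial functions. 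Without this construction, or an equivalent one, your step (c) remains unproven.
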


\begin{proof}
  The proof is based on theory of mixed saddle point problems (see
  \cite[Chapter 4]{Boffi:book}); we need to prove kernel coercivity of
  the bilinear form $a_h^V(\cdot, \cdot)$ and the inf-sup condition of
  the constraints given by the bilinear forms
  $b_{1,h}^V(\cdot, \cdot)$ and $b_{2,h}^V(\cdot, \cdot)$.

  Let $\sigma_h^V \in \Sigma_h^V$ such that
  $b^V_{1,h}(\boldsymbol{q}^V_h, \sigma^V_h) + b^V_{2,h}(\mu^V_h,
  \sigma^V_h) = 0$ for all
  $ \boldsymbol{q}^V_h \in \boldsymbol{Q}^V_h$ and for all
  $\mu^V_h \in \Lambda^V_h.$ Next, note that
  $(-\nabla \cdot \sigma_h^V,\jump{\sigma_h^V \cdot n} ) \in
  \boldsymbol{Q}_h^V$ since integration by parts shows
  \begin{equation*}
    -\sum_{K \in \mathcal{T}_V} \int_K \nabla \cdot \sigma_h^V \dif x
    + \sum_{F \in \mathcal{F}_V} \int_F \jump{\sigma_h^V \cdot n} \dif s = 0.
  \end{equation*}
  Choosing
  $\boldsymbol{q}^V_h = (-\nabla \cdot \sigma_h^V,\jump{\sigma_h^V
    \cdot n} )$ and $\mu_h^V = 0$ in
  $b^V_{1,h}(\boldsymbol{q}^V_h, \sigma^V_h) + b^V_{2,h}(\mu^V_h,
  \sigma^V_h) = 0$ results in
  \begin{equation}
    \label{eq:kernelcoercivity_ah}
    \norm[0]{\sigma_h^V}_{\Sigma_h^V}^2 = \norm[0]{\sigma_h^V}_{L^2(\omega_V)}^2 = a_h^V(\sigma_h^V, \sigma_h^V),
  \end{equation}
  proving kernel coercivity of $a_h^V(\cdot, \cdot)$.

  We continue with the inf-sup condition of $b^V_{1,h}(\cdot,
  \cdot)$. To this end let $\boldsymbol{q}_h^V \in \boldsymbol{Q}_h^V$
  be arbitrary. Using \cite[Lemma 3 and Lemma 9]{Braess:2008} we find
  a function $\sigma^q_h \in \Sigma_h^V$ such that on each cell
  $K \in \mathcal{T}_V$ we have $-\nabla \cdot \sigma^q_h = q_h^V$, on
  all facets $F \in \mathcal{F}_V$ we have
  $\jump{\sigma^q_h\cdot n} = \bar{q}^V_h$, and
  $\sigma^q_h \in \Sigma_h^V$ satisfies the stability estimate
  $\norm[0]{\sigma^q_h}_{\Sigma_h^V} \lesssim
  \norm[0]{\boldsymbol{q}_h}_{\boldsymbol{Q}_h^V}$. Together these
  results prove the inf-sup condition
  \begin{equation}
    \label{eq:infsupone}
    \inf\limits_{\sigma_h^V \in \Sigma_h^V} \frac{ b^V_{1,h}(\boldsymbol{q}^V_h, \sigma^V_h) }{\norm[0]{\sigma^V_h}_{\Sigma_h^V}}
    \gtrsim \norm[0]{\boldsymbol{q}_h}_{\boldsymbol{Q}_h^V}.
  \end{equation}

  We next consider $b^V_{2,h}(\cdot, \cdot)$. Choose an arbitrary
  $\mu_h^V=\kappa_{x - V}(\zeta) \in \Lambda_h^V$ where
  $\zeta \in ([P_{k-3}(\omega_V)]^{d(d-1)/2})$ and set
  $\sigma^\mu_h = \operatorname{curl}(\phi_V \zeta)$, where $\phi_V$
  is the corresponding linear hat function of the fixed vertex
  $V$. Since $\phi_V \zeta$ is a polynomial of order $k-2$ and since
  $\phi_V$ vanishes at the boundary $\partial \omega_V$, so that
  $\sigma^\mu_h \cdot n = 0$ on $\partial \omega_V$, we have
  $\sigma^\mu_h \in \Sigma_h^V$. (Here $\operatorname{curl}$ is the
  usual curl operator in three dimensions. In two dimensions it is
  defined as the rotated gradient, i.e.
  $\operatorname{curl} = (-\partial_{x_2}, \partial_{x_1})$.) Now note
  that $b_{1,h}(\boldsymbol{q}^V_h, \sigma^\mu_h) = 0$ for all test
  functions $\boldsymbol{q}^V_h \in \boldsymbol{Q}^V_h$ since the
  divergence of a curl is zero on $K \in \mathcal{T}_V$ and the normal
  jump disappears by properties of the discrete de Rham complex (see,
  for example, \cite{Boffi:book}). With the same steps as in the proof
  of \cite[Theorem 12]{Lederer:2017b} we obtain the inf-sup condition
  (on the kernel of $b_{1,h}(\cdot, \cdot)$)
  \begin{equation*}
    \inf\limits_{\substack{\sigma_h^V \in \Sigma_h^V \\ b_{1,h}(\boldsymbol{q}^V_h, \sigma^\mu_h) = 0, \forall \boldsymbol{q}^V_h \in \boldsymbol{Q}^V_h}}
    \frac{ b^V_{2,h}(\mu^V_h, \sigma^\mu_h)}{\norm[0]{\sigma^V_h}_{\Sigma_h^V}} \gtrsim \norm[0]{\mu^V_h}_{\Lambda_h^V}.
  \end{equation*}
  The coupled inf-sup condition for $b_{1,h} + b_{2,h}$ now follows by
  application of \cite[Theorem 3.1]{Howell:2011}. Together with the
  kernel coercivity \cref{eq:kernelcoercivity_ah} we conclude
  existence, uniqueness and stability of \cref{eq:local_prob}.
\end{proof}

\subsubsection{Definition and analysis of the EDG-reconstruction operator}
\label{sss:defanalEDGrecop}

In this section we define the EDG-reconstruction operator. To this end
we first introduce an operator defined on vertex patches.

Let $V \in \mathcal{V}$ be an arbitrary vertex and let
$F \in \mathcal{F}_V$ be an arbitrary edge on the corresponding vertex
patch of $V$. Furthermore, let $\bar{q}_h \in P^{m}(F)$ with
$m = k,k-1$, let $\varphi_i$ with $i = 0,\ldots,m$ be a Lagrangian
basis of the polynomial space $P^{m}(F)$, and let $\xi_i$ be the
associated Lagrangian points. Then for $x \in F$ we can write
$\bar{q}_h(x) = \sum_{i=0}^{m} c_i \varphi_i(x)$ where $c_i$ are the
related coefficients. We define the bubble projection
$\mathbb{B}_F^{V}: P^{m}(F) \rightarrow P^{m}(F)$ as
\begin{equation*}
  \mathbb{B}_F^{V}(\bar{q}_h)(x) = \sum_{i=0}^{m} c_i\phi_V(\xi_i) \varphi_i(x).
\end{equation*}
The definition of the bubble projector $\mathbb{B}_F^{V}$ reads as a
simple weighting of the coefficients $c_i$ with the value of the hat
function at the related Lagrangian point $\xi_i$. For the case $d=2$,
let $V_o$ be the opposite vertex of $V$ of $F$. By definition,
$\mathbb{B}_F^{V}(\bar{q}_h)(V_o) = 0$ and we have the identity
\begin{equation}
  \label{bubbleid}
  \mathbb{B}_F^{V}(\bar{q}_h)(x) +  \mathbb{B}_F^{V_o}(\bar{q}_h)(x)
  =  \sum_{i=0}^{m} c_i(\phi_V(\xi_i) + \phi_{V_o}(\xi_i)) \varphi_i(x)
  = \bar{q}_h(x).
\end{equation}
For the case $d = 3$, let $V_{o_1}$ and $V_{o_2}$ be the opposite
vertices of $V$ of the triangle $F$, and let $E_{12}$ be the edge
connecting the vertices $V_{o_1}$ and $V_{o_2}$. With the same
arguments as above we have $\mathbb{B}_F^{V}(\bar{q}_h)|_{E_{12}} = 0$
and
\begin{equation}
  \label{bubbleid_three}
  \mathbb{B}_F^{V}(\bar{q}_h)(x) +  \mathbb{B}_F^{V_{o_1}}(\bar{q}_h)(x) + \mathbb{B}_F^{V_{o_2}}(\bar{q}_h)(x) =\bar{q}_h(x).
\end{equation}
Note that the definition for $d=3$ equals the bubble projection on
triangles defined in \cite[Section 4.1]{Lederer:2017b}.

Now, let $u_h \in V_h$ be an arbitrary but fixed discrete velocity,
and let $\sigma_h^V$ be the solution to the local problem
\cref{eq:local_prob} defined on vertex patches with the following
right hand side:
\begin{equation}
  \label{eq:localrhs}
  G^V(\boldsymbol{q}_h)
  = \sum_{F \in \mathcal{F}_V} \int_F \jump{u_h \cdot n} \mathbb{B}_F^V\circ (\operatorname{id} - \Pi^0_{\mathcal{F}_V})(\bar{q}_h) \dif s
  \quad \forall \boldsymbol{q}_h = (q_h,\bar{q}_h) \in \boldsymbol{Q}_h,
\end{equation}
where $\Pi^0_{\mathcal{F}_V}$ is the projection onto constants on
$\mathcal{F}_V$, i.e.,
\begin{equation*}
  \Pi^0_{\mathcal{F}_V}(q) = \frac{1}{|\mathcal{F}_V|} \sum_{F \in \mathcal{F}_V} \int_F q \dif s
  \quad \forall q \in L^2(\mathcal{F}_V).
\end{equation*}

\begin{theorem}[Properties of the local solutions]
  \label{thm:prop_loc_sol}
  Let $u_h \in V_h$, let $V \in \mathcal{V}$ be arbitrary but fixed,
  and let $\sigma_h^V$ be the solution to \cref{eq:local_prob} with
  the right hand side defined by \cref{eq:localrhs}. Furthermore, let
  $\sigma_h^V$ be trivially extended to $\Omega \setminus \omega_V$ by
  zero. 
  \begin{enumerate}[label=(\roman*.)]
  \item For all  $(q_h,\bar{q}_h^d) \in Q_h \times \bar{Q}_h^d$ we have
    \begin{equation*}
      (\nabla \cdot \sigma_h^V, q_h)_{L^2(\mathcal{T})}
      + (\jump{\sigma_h^V \cdot n}, \bar{q}_h^d)_{L^2(\mathcal{F})}
      = (\mathbb{B}_F^V \circ (\operatorname{id} - \Pi^0_{\mathcal{F}_V})(\jump{u_h\cdot n}) , \bar{q}_h^d)_{L^2(\mathcal{F})}.
    \end{equation*}
    \label{prop_localsol_one}
  \item We have
    $ \norm[0]{\sigma_h^V}_{\Sigma_h^V} \lesssim \sqrt{h} \norm[0]{
      \jump{u_h \cdot n}}_{L^2(\mathcal{F}_V)}$.
    \label{prop_localsol_two}
  \item If $b_h(\boldsymbol{q}_h, u_h) = 0$ for all
    $\boldsymbol{q}_h \in \boldsymbol{Q}_h$ then
    $(\sigma_h^V, \eta_h)_{L^2(\omega_V)} = 0$ for all
    $\eta_h \in P^{k-2}(\omega_V)$.
    \label{prop_localsol_three}
  \end{enumerate}
\end{theorem}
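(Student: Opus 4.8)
The plan is to treat the three items in order, extracting each from the defining equations \cref{eq:local_prob} together with the local stability estimate \cref{eq:localstability} and the structure of the right hand side \cref{eq:localrhs}. For item \ref{prop_localsol_one}, I would start from the mass equation \cref{eq:local_prob_mass}, i.e. $b^V_{1,h}(\boldsymbol{q}^V_h, \sigma^V_h) = G^V(\boldsymbol{q}^V_h)$ for all $\boldsymbol{q}^V_h \in \boldsymbol{Q}^V_h$. Expanding $b^V_{1,h}$ via \cref{eq:formBone_local} and $G^V$ via \cref{eq:localrhs}, this reads $(\nabla\cdot\sigma^V_h, q^V_h)_{L^2(\omega_V)} + (\jump{\sigma^V_h\cdot n}, \bar q^V_h)_{L^2(\mathcal{F}_V)} = (\mathbb{B}^V_F\circ(\operatorname{id}-\Pi^0_{\mathcal{F}_V})(\jump{u_h\cdot n}), \bar q^V_h)_{L^2(\mathcal{F}_V)}$ for all $\boldsymbol{q}^V_h$ with zero mean on the patch. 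Since $\sigma^V_h$ is extended by zero outside $\omega_V$, all three integrals may be extended to $\mathcal{T}$ and $\mathcal{F}$ respectively, so it remains to remove the zero-mean restriction on the test function. The obstruction to dropping the mean-value constraint is exactly a constant test pair; but by the integration-by-parts identity $-\sum_{K}\int_K\nabla\cdot\sigma^V_h + \sum_F\int_F\jump{\sigma^V_h\cdot n} = 0$ (used already in the stability proof) the left hand side is orthogonal to constants, and on the right hand side $\operatorname{id}-\Pi^0_{\mathcal{F}_V}$ annihilates constants in the relevant pairing, so the identity in fact holds for arbitrary $(q_h,\bar q_h^d)\in Q_h\times\bar Q_h^d$. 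I would write this last step out carefully since it is the only genuinely non-formal point in item \ref{prop_localsol_one}.

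For item \ref{prop_localsol_two}, the route is the stability estimate \cref{eq:localstability}, which gives $\norm{\sigma^V_h}_{\Sigma^V_h} \lesssim \norm{G^V}_{(\boldsymbol{Q}^V_h)'}$, so I only need to bound the dual norm of the functional in \cref{eq:localrhs}. For $\boldsymbol{q}^V_h\in\boldsymbol{Q}^V_h$, Cauchy--Schwarz on each facet gives $|G^V(\boldsymbol{q}^V_h)| \le \norm{\jump{u_h\cdot n}}_{L^2(\mathcal{F}_V)}\,\norm{\mathbb{B}^V_F\circ(\operatorname{id}-\Pi^0_{\mathcal{F}_V})\bar q^V_h}_{L^2(\mathcal{F}_V)}$; the bubble projection $\mathbb{B}^V_F$ is $L^2$-stable (uniformly, since it is a fixed coefficient-reweighting on the reference element and $\phi_V\le 1$) and $\operatorname{id}-\Pi^0_{\mathcal{F}_V}$ is an $L^2$-projection-type operator, so the second factor is $\lesssim \norm{\bar q^V_h}_{L^2(\mathcal{F}_V)} \lesssim \sqrt h\,\norm{\boldsymbol{q}^V_h}_{\boldsymbol{Q}^V_h}$ by the definition of $\norm{\cdot}_{\boldsymbol{Q}^V_h}$. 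Hence $\norm{G^V}_{(\boldsymbol{Q}^V_h)'} \lesssim \sqrt h\,\norm{\jump{u_h\cdot n}}_{L^2(\mathcal{F}_V)}$, which is the claim.

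For item \ref{prop_localsol_three}, I want to show $(\sigma^V_h,\eta_h)_{L^2(\omega_V)} = 0$ for every $\eta_h\in[P^{k-2}(\omega_V)]^d$, assuming $b_h(\boldsymbol{q}_h,u_h)=0$ for all $\boldsymbol{q}_h\in\boldsymbol{Q}_h$. The natural strategy is to use the polynomial decomposition \cref{polydecomp}: write $\eta_h = \nabla r_h + \kappa_{x-V}(\zeta)$ with $r_h\in P^{k-1}(\omega_V)$ and $\kappa_{x-V}(\zeta)\in\Lambda^V_h$. The $\Lambda^V_h$-component is killed immediately by the orthogonality equation \cref{eq:local_prob_ortho}, $b^V_{2,h}(\mu^V_h,\sigma^V_h)=0$, since $b^V_{2,h}$ is exactly the $L^2(\omega_V)$-pairing. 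For the gradient component I would integrate by parts: $(\sigma^V_h,\nabla r_h)_{L^2(\omega_V)} = -\sum_K(\nabla\cdot\sigma^V_h, r_h)_K + \sum_{F\in\mathcal{F}_V}(\jump{\sigma^V_h\cdot n}, r_h)_F$ (the boundary term on $\partial\omega_V$ vanishes because $\sigma^V_h\cdot n=0$ there), which is $-b^V_{1,h}(\boldsymbol{r}_h,\sigma^V_h)$ with $\boldsymbol{r}_h := (r_h, r_h|_{\Gamma^0_V})$ — note $r_h$ is globally continuous on the patch so this is a legitimate element of $Q^V_h\times\bar Q^V_h$, though possibly not mean-zero. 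Using item \ref{prop_localsol_one} (or \cref{eq:local_prob_mass} after adjusting by a constant, which is fine since the left side is orthogonal to constants) this equals $-(\mathbb{B}^V_F\circ(\operatorname{id}-\Pi^0_{\mathcal{F}_V})\jump{u_h\cdot n}, r_h)_{L^2(\mathcal{F}_V)}$. The remaining task — and this is the main obstacle — is to show this facet pairing vanishes under the hypothesis $b_h(\boldsymbol{q}_h,u_h)=0$. I expect to use the explicit form of $b_h$ in \cref{eq:formB}: testing against $\boldsymbol{q}_h=(q_h,\bar q_h)$ with $\bar q_h$ supported near the patch and $q_h$ chosen to cancel the volume term (using that $\nabla\cdot u_h=0$ on cells, which follows from $b_h(\cdot,u_h)=0$ as noted in \cref{prop::discstokes}) reduces $b_h(\boldsymbol{q}_h,u_h)=0$ to $\sum_{F}\int_F \jump{u_h\cdot n}\,\bar q_h\,\dif s = 0$ for all admissible continuous facet functions $\bar q_h$; then one must check that $\mathbb{B}^V_F\circ(\operatorname{id}-\Pi^0_{\mathcal{F}_V})$ applied to $r_h$ (or the transpose, moving the bubble/mean operators onto $r_h$ against $\jump{u_h\cdot n}$) produces an admissible such $\bar q_h$, using the partition-of-unity identities \cref{bubbleid}, \cref{bubbleid_three} to reassemble contributions from the vertices of each facet and the continuity of the global pressure trace space. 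This bookkeeping — matching the local bubble-weighted, mean-subtracted test function against the global orthogonality $b_h(\boldsymbol{q}_h,u_h)=0$ — is exactly the analogue of the corresponding step in \cite[Section 4]{Lederer:2017b}, and I would follow that argument closely, the only new ingredient being that here the divergence-on-cells is automatic so only the facet-jump part of $b_h$ is in play.
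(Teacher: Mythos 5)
Your proposal follows essentially the same route as the paper for all three items: the constant-shift argument to remove the zero-mean constraint in (i.), the dual-norm bound via \cref{eq:localstability} together with $L^2$-stability of $\mathbb{B}_F^V$ and $\Pi^0_{\mathcal{F}_V}$ in (ii.), and the decomposition \cref{polydecomp} plus \cref{eq:local_prob_ortho} and integration by parts in (iii.). The final step of (iii.) that you flag as the main obstacle is in fact immediate and requires no partition-of-unity reassembly: $\mathbb{B}_F^V\circ(\operatorname{id}-\Pi^0_{\mathcal{F}_V})\theta$, extended by zero outside $\Gamma^0_V$, is already a continuous element of $\bar{Q}_h$ (since $\mathbb{B}_F^V$ vanishes at the opposite vertex, resp.\ edge, of each facet), so the hypothesis $b_h(\boldsymbol{q}_h,u_h)=0$, combined with $\nabla\cdot u_h=0$ on cells, applied with this facet test function kills the term directly.
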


\begin{proof}
  We start with the proof of \cref{prop_localsol_one}. We first show
  that the solution $\sigma_h^V$ satisfies
  \begin{equation*}
    b_{1,h}^V((c,c), \sigma_h^V) = G^V( (c,c)) \quad \forall c \in \mathbb{R}.
  \end{equation*}
  To show this we first note that the left hand side vanishes by
  integration by parts, i.e., $ b_{1,h}((c,c), \sigma_h^V)=0$. Next,
  note that $\Pi^0_{\mathcal{F}_V}(c) = c$ and so the right hand side also vanishes.

  Now let $(q_h,\bar{q}_h^d) \in Q_h \times \bar{Q}_h^d$ be arbitrary
  and set
  $\boldsymbol{q}_h^V = (q_h|_{\omega_V}, \bar{q}_h^d|_{\omega_V})$,
  i.e., the restriction on the vertex patch. We define the constant
  \begin{equation*}
    c := \frac{1}{|\mathcal{T}_V| + |\mathcal{F}_V|} \del{\sum_{K \in \mathcal{T}_V} \int_K q_h|_{\omega_V} \dif x
      + \sum_{F \in \mathcal{F}_V} \int_F \bar{q}_h^d|_{\omega_V} \dif s}, 
  \end{equation*}
  and write
  $\boldsymbol{q}_h^V = (q_h|_{\omega_V}- c ,\bar{q}_h^d|_{\omega_V} -
  c) + (c,c)$. The first term of the right hand side is an element of
  $\boldsymbol{Q}_h^V$, and so with the above findings and
  \cref{eq:local_prob_mass} we observe that
  \begin{equation*}
    b_{1,h}^V(\boldsymbol{q}_h^V, \sigma_h^V) = G^V(\boldsymbol{q}_h^V).
  \end{equation*}
  Using that $\sigma_h^V$ is zero on $\Omega \setminus \omega_V$ we
  then find that
  \begin{align*}
    (\nabla \cdot \sigma_h^V, q_h)_{L^2(\mathcal{T})} + (\jump{\sigma_h^V \cdot n}, \bar{q}_h)_{L^2(\mathcal{F})}
    &= b_{1,h}^V(\boldsymbol{q}_h^V, \sigma_h^V) = G^V(\boldsymbol{q}_h^V)
    \\
    &= ( \mathbb{B}_F^V\circ (\operatorname{id} - \Pi^0_{\mathcal{F}_V})(\jump{u_h\cdot n}), \bar{q}_h)_{L^2(\mathcal{F}_V)}.
  \end{align*}
  
  We next prove \cref{prop_localsol_two}. Using the stability result
  \cref{eq:localstability} this follows by definition of the dual
  norm, 
  \begin{align*}
    \norm[0]{\sigma_h^V}_{\Sigma_h^V} \lesssim \norm[0]{G^V}_{(\boldsymbol{Q}_h^V)'}
    &:=\sup\limits_{0\ne \boldsymbol{r}_h^V\in\boldsymbol{Q}_h^V}\frac{G^V(\boldsymbol{r}_h^V)}{\norm[0]{\boldsymbol{r}_h^V}_{\boldsymbol{Q}_h^V}}
    \\
    & \le
      \sup\limits_{0\ne \boldsymbol{r}_h^V\in\boldsymbol{Q}_h^V}
      \frac{\norm[0]{ \mathbb{B}_F^V\circ(\operatorname{id}-\Pi_{\mathcal{F}_V}^0)\jump{u_h \cdot n}}_{L^2(\mathcal{F}_V)}
      \norm[0]{\bar{r}_h^V}_{L^2(\mathcal{F}_V)}}{\norm[0]{\boldsymbol{r}_h^V}_{\boldsymbol{Q}_h^V}}
    \\    
    &\le \sqrt{h} \norm[0]{ \mathbb{B}_F^V\circ(\operatorname{id}-\Pi_{\mathcal{F}_V}^0)\jump{u_h \cdot n}}_{L^2(\mathcal{F}_V)}
    \\
    &\le \sqrt{h} \norm[0]{\jump{u_h \cdot n} }_{L^2(\mathcal{F}_V)}
      + \sqrt{h} \norm[0]{\Pi_{\mathcal{F}_V}^0\jump{u_h \cdot n} }_{L^2(\mathcal{F}_V)}
    \\
    &\lesssim \sqrt{h} \norm[0]{\jump{u_h \cdot n}}_{L^2(\mathcal{F}_V)},
  \end{align*}
  where we used continuity of $\Pi_{\mathcal{F}_V}^0$ and the bubble
  projector since the weighting of the coefficient lies in $[0,1]$.
  
  Finally, we prove \cref{prop_localsol_three}. Let
  $\eta_h \in P^{k-2}(\omega_V)$ be arbitrary. Using decomposition
  \cref{polydecomp} we find polynomials
  $\theta \in P^{k-1}(\omega_V)$, and
  $\zeta \in ([P_{k-3}(\omega_V)]^{d(d-1)/2})$ such that
  \begin{equation*}
    \eta_h = \nabla \theta +  \kappa_{x - V}(\zeta).
  \end{equation*}
  Note that $\theta$ can be chosen such that
  $(\theta, \theta) \in \boldsymbol{Q}_h^V$ since the above
  decomposition includes only the gradient of $\theta$. Using
  \cref{eq:local_prob_ortho}, we note that
  $(\sigma_h^V, \kappa_{x - V}(\zeta))_{L^2(\omega_V)} = 0$. Next,
  using integration by parts,
  \begin{align*}
    (\sigma_h^V, \nabla \theta)_{\omega_V}
    &= b_{1,h}^V((\theta,\theta), \sigma_h^V) = G^V((\theta,\theta))
    \\
    &= \sum_{F \in \mathcal{F}_V} \int_F \jump{u_h \cdot n} \mathbb{B}_F^V \circ (\operatorname{id} - \Pi^0_{\mathcal{F}_V}) \theta \dif s = 0,
  \end{align*}
  since
  $\mathbb{B}_F^V \circ (\operatorname{id} - \Pi^0_{\mathcal{F}_V})
  \theta$ is an element of $\bar{Q}_h$.
\end{proof}

We are now able to define a reconstruction operator that satisfies the
properties of \cref{as:R_assumption}.

\begin{lemma}
  \label{lem:def:reconstruction}
  Let $u_h \in V_h$ and let $\sigma_h^V$ be the solution to
  \cref{eq:local_prob} with the right hand side defined by
  \cref{eq:localrhs} for an arbitrary vertex $V \in \mathcal{V}$. The
  reconstruction operator $\mathcal{R}: V_h \rightarrow V_h$ defined
  by
  \begin{equation*}
    \mathcal{R}(u_h) := u_h - \sigma_h \quad \textrm{with} \quad  \sigma_h := \sum_{V \in \mathcal{V}} \sigma_h^V.  
  \end{equation*}
  satisfies the properties of \cref{as:R_assumption}.
\end{lemma}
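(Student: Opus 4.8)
The plan is to verify the three items of \cref{as:R_assumption} one at a time, using the properties of the local solutions $\sigma_h^V$ collected in \cref{thm:prop_loc_sol} together with a partition-of-unity argument over the vertex patches. First note that $\mathcal{R}$ indeed maps $V_h$ into $V_h$: each $\sigma_h^V\in\Sigma_h^V$, extended by zero to $\Omega\setminus\omega_V$, is a piecewise polynomial of degree $k$ and hence lies in $V_h$, so $\sigma_h=\sum_{V\in\mathcal{V}}\sigma_h^V\in V_h$ and $\mathcal{R}(u_h)=u_h-\sigma_h\in V_h$. Throughout I would use the finite overlap of the patches (each cell lies in at most $d+1$ of the $\omega_V$, and correspondingly for the $\mathcal{F}_V$).

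For \cref{as:R_assumption_one}, assume $b_h(\boldsymbol{q}_h,u_h)=0$ for all $\boldsymbol{q}_h\in\boldsymbol{Q}_h$. Choosing $\boldsymbol{q}_h=(\divergence u_h,0)$ already gives $\divergence u_h=0$ on each cell, while \cref{prop_localsol_one} of \cref{thm:prop_loc_sol} with $\bar{q}_h^d=0$ shows that $\divergence\sigma_h^V$ is $L^2(\mathcal{T})$-orthogonal to $Q_h$; since $\divergence\sigma_h^V\in Q_h$ this forces $\divergence\sigma_h^V=0$ cell-wise, hence $\divergence\mathcal{R}(u_h)=0$ point-wise. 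For the $H(\divergence)$-conformity I would sum the identity of \cref{prop_localsol_one} over all $V\in\mathcal{V}$, observing that on a fixed facet $F$ only the vertices of $F$ contribute. Using that the bubble projections attached to the vertices of $F$ form a partition of unity (\cref{bubbleid} for $d=2$, \cref{bubbleid_three} for $d=3$) together with the identity $\mathbb{B}_F^V(1)=\phi_V|_F$ --- valid because the degree-$m$ interpolant reproduces the affine hat function $\phi_V$ --- the sum collapses $(\jump{\sigma_h\cdot n},\bar{q}_h^d)_{L^2(\mathcal{F})}$ to $(\jump{u_h\cdot n},\bar{q}_h^d)_{L^2(\mathcal{F})}$ up to residual terms originating from the vertex-wise constant corrections $\Pi^0_{\mathcal{F}_V}$. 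These residual terms take the form $\sum_{V}\Pi^0_{\mathcal{F}_V}(\bar{q}_h^d)\sum_{F\in\mathcal{F}_V}\int_F\jump{u_h\cdot n}\,\phi_V\dif s$, and they vanish because $\phi_V$ is zero on $\partial\omega_V$ and $\phi_V\in\bar{Q}_h$, so the inner sum equals $b_h((0,\phi_V),u_h)=0$. Hence $\jump{\sigma_h\cdot n}=\jump{u_h\cdot n}$ on every facet, so $\jump{\mathcal{R}(u_h)\cdot n}=0$ and $\mathcal{R}(u_h)\in H(\divergence,\Omega)$. I expect this to be the main obstacle: the corrections $\Pi^0_{\mathcal{F}_V}$ are precisely what makes each local problem solvable, and one must check that they are annihilated exactly by the partition-of-unity bookkeeping --- which works only because $u_h$ is weakly divergence-free.

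For \cref{as:R_assumption_two}, write $\mathcal{R}(u_h)-u_h=-\sigma_h$. By \cref{prop_localsol_two} of \cref{thm:prop_loc_sol}, $\norm{\sigma_h^V}_{L^2(\omega_V)}\le\norm[0]{\sigma_h^V}_{\Sigma_h^V}\lesssim\sqrt{h}\,\norm[0]{\jump{u_h\cdot n}}_{L^2(\mathcal{F}_V)}$, so by finite overlap $\norm{\sigma_h}_{L^2(\Omega)}^2\lesssim\sum_{V\in\mathcal{V}}\norm{\sigma_h^V}_{L^2(\omega_V)}^2\lesssim h\,\norm[0]{\jump{u_h\cdot n}}_{L^2(\mathcal{F})}^2$. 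A discrete trace/scaling estimate, splitting $\jump{u_h\cdot n}$ through $\bar{u}_h$ and invoking the definition of $\tnorm{\cdot}_1$, gives $\norm[0]{\jump{u_h\cdot n}}_{L^2(\mathcal{F})}\lesssim\sqrt{h}\,\tnorm{u_h}_1$, and combining the two yields $\norm{\mathcal{R}(u_h)-u_h}_{L^2(\Omega)}=\norm{\sigma_h}_{L^2(\Omega)}\lesssim h\,\tnorm{u_h}_1$.

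For \cref{as:R_assumption_three}, use again $\mathcal{R}(u_h)-u_h=-\sigma_h$ and \cref{prop_localsol_three} of \cref{thm:prop_loc_sol}, which gives that $\sigma_h^V$ is $L^2(\omega_V)$-orthogonal to $P^{k-2}(\omega_V)$; this is where the weak divergence-freeness of $u_h$ enters, and it is exactly the situation in which item (iii) is applied in the error analysis. Hence $(g,\sigma_h^V)_{L^2(\omega_V)}=(g-\Pi^{k-2}_{\omega_V}g,\sigma_h^V)_{L^2(\omega_V)}$, so that $|(g,\sigma_h)_{L^2(\Omega)}|\le\sum_{V}\norm{g-\Pi^{k-2}_{\omega_V}g}_{L^2(\omega_V)}\norm{\sigma_h^V}_{L^2(\omega_V)}$. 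Inserting the bound $\norm{\sigma_h^V}_{L^2(\omega_V)}\lesssim\sqrt{h}\,\norm[0]{\jump{u_h\cdot n}}_{L^2(\mathcal{F}_V)}$ of \cref{prop_localsol_two} of \cref{thm:prop_loc_sol} and applying the Cauchy--Schwarz inequality over $V$, with the weights arranged so that the first factor becomes $\tnorm{g}_{\textrm{con},k}$, the second factor is $(\sum_V h^{-1}\norm[0]{\jump{u_h\cdot n}}_{L^2(\mathcal{F}_V)}^2)^{1/2}\lesssim h^{-1/2}\norm[0]{\jump{u_h\cdot n}}_{L^2(\mathcal{F})}\lesssim\tnorm{u_h}_1$ by finite overlap and the trace estimate of the previous step. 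This gives $|(g,\mathcal{R}(u_h)-u_h)_{L^2(\Omega)}|\lesssim\tnorm{g}_{\textrm{con},k}\,\tnorm{u_h}_1$, completing the verification of \cref{as:R_assumption}.
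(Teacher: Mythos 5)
Your proposal is correct and follows essentially the same route as the paper: item (i.) via the identity of \cref{prop_localsol_one}, the partition of unity \cref{bubbleid}/\cref{bubbleid_three} of the bubble projectors, and the vanishing of the $\Pi^0_{\mathcal{F}_V}$ residual through the discrete divergence-freeness of $u_h$; items (ii.) and (iii.) via \cref{prop_localsol_two}, \cref{prop_localsol_three} and a weighted Cauchy--Schwarz over the patches. The only (harmless) variations are that you establish $\nabla\cdot\sigma_h^V=0$ cell-wise separately by testing with $\bar{q}_h^d=0$ before treating the normal jumps, and that you are slightly more explicit than the paper about the finite-overlap constant and about the fact that item (iii.) relies on $u_h$ being discretely divergence-free.
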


\begin{proof}
  We first prove that the reconstruction operator satisfies
  \cref{as:R_assumption_one} of \cref{as:R_assumption}. Let
  $u_h \in V_h$ such that $b_h(\boldsymbol{r}_h, u_h) = 0$ for all
  $\boldsymbol{r}_h \in \boldsymbol{Q}_h$. Now let
  $(q_h, \bar{q}_h^d) \in Q_h \times \bar{Q}_h^d$ be arbitrary, then
  \begin{equation}
    \label{eq:decompRuh}
    \begin{split}
      \sum_{K \in \mathcal{T}} \int_K \nabla \cdot \mathcal{R}(u_h) & q_h \dif x
      + \sum_{F \in \mathcal{F}} \int_F \jump{\mathcal{R}(u_h) \cdot n} \bar{q}_h^d \dif s
      \\
      =& \sum_{K \in \mathcal{T}} \int_K \nabla \cdot u_h q_h \dif x
      + \sum_{F \in \mathcal{F}} \int_F \jump{u_h \cdot n} \bar{q}_h^d \dif s
      \\
      & - \sum_{K \in \mathcal{T}} \int_K \nabla \cdot \sigma_h q_h \dif x
      - \sum_{F \in \mathcal{F}} \int_F \jump{\sigma_h \cdot n} \bar{q}_h^d \dif s.      
    \end{split}
  \end{equation}
  Consider the last two terms on the right hand side. By definition of
  $\sigma_h$,
  \begin{align*}
    - \sum_{K \in \mathcal{T}}
    & \int_K \nabla \cdot \sigma_h q_h \dif x
      - \sum_{F \in \mathcal{F}} \int_F \jump{\sigma_h \cdot n} \bar{q}_h^d \dif s
    \\
    &= - \sum_{K \in \mathcal{T}} \sum_{V \in K} \int_K \nabla \cdot \sigma^V_h q_h \dif x
      - \sum_{F \in \mathcal{F}} \sum_{V \in F} \int_F \jump{\sigma^V_h \cdot n} \bar{q}_h^d \dif s
    \\
    &= \sum_{V \in \mathcal{V}}\del[2]{ - \sum_{K \in \mathcal{T}_V} \int_K \nabla \cdot \sigma^V_h q_h \dif x
      - \sum_{F \in \mathcal{F}_V} \int_F \jump{\sigma^V_h \cdot n} \bar{q}_h^d \dif s }
    \\
    &=  \sum_{V \in \mathcal{V}}\del[2]{- \sum_{F \in \mathcal{F}_V}
      \int_F \mathbb{B}_F^V \circ (\operatorname{id} - \Pi^0_{\mathcal{F}_V})(\jump{u_h\cdot n} ) \bar{q}_h^d  \dif s  }
    \\
    &=  \sum_{V \in \mathcal{V}}\del[2]{- \sum_{F \in \mathcal{F}_V} \int_F \mathbb{B}_F^V (\jump{u_h\cdot n}) \bar{q}_h^d  \dif s
      + \sum_{F \in \mathcal{F}_V} \int_F  \jump{u_h\cdot n} \mathbb{B}_F^V \circ \Pi^0_{\mathcal{F}_V}(\bar{q}_h^d)  \dif s  },    
  \end{align*}
  where we used \cref{prop_localsol_one} of \cref{thm:prop_loc_sol}
  for the third equality, and that
  $\mathbb{B}_F^V \circ \Pi^0_{\mathcal{F}_V}$ is a symmetric self
  adjoint operator for the fourth equality. Since $u_h$ is discretely
  divergence-free and
  $\mathbb{B}_F^V \circ \Pi^0_{\mathcal{F}_V}(\bar{q}_h) \in
  \bar{Q}_h$ the second sum on the right hand side
  vanishes. Therefore, using \cref{bubbleid_three},
  \begin{align}
    \label{eq:divsigmahdef}
    -\sum_{K \in \mathcal{T}} \int_K & \nabla \cdot \sigma_h q_h \dif x
      - \sum_{F \in \mathcal{F}} \int_F \jump{\sigma_h \cdot n} \bar{q}_h^d \dif s
    \\ \nonumber
    & =  \sum_{V \in \mathcal{V}}  \sum_{F \in \mathcal{F}_V} - \int_F  \mathbb{B}_F^V (\jump{u_h\cdot n}) \bar{q}_h^d  \dif s
    \\ \nonumber
    & =  \sum_{F \in \mathcal{F}} \sum_{V \in F} - \int_F \mathbb{B}_F^V(\jump{u_h\cdot n}) \bar{q}_h^d \dif s
      =  -\sum_{F \in \mathcal{F}} \int_F \jump{u_h\cdot n} \bar{q}_h^d  \dif s.
  \end{align}
  Combining \cref{eq:decompRuh} and \cref{eq:divsigmahdef} we find
  that
  \begin{multline*}
    \sum_{K \in \mathcal{T}} \int_K \nabla \cdot \mathcal{R}(u_h) q_h \dif x
    + \sum_{F \in \mathcal{F}} \int_F \jump{\mathcal{R}(u_h) \cdot n} \bar{q}_h^d \dif s
    \\
    = \sum_{K \in \mathcal{T}} \int_K \nabla \cdot u_h q_h \dif x
    + \sum_{F \in \mathcal{F}} \int_F \jump{u_h \cdot n} \bar{q}_h^d \dif s 
    -  \sum_{F \in \mathcal{F}} \int_F \jump{u_h\cdot n} \bar{q}_h^d \dif s = 0,
  \end{multline*}
  where we used that $\nabla \cdot u_h$ vanishes in each cell. Now,
  since $\nabla \cdot \mathcal{R}(u_h) \in Q_h$ and
  $\jump{\mathcal{R}(u_h) \cdot n} \in \bar{Q}_h^d$ we conclude the
  proof.

  To prove that the reconstruction operator satisfies
  \cref{as:R_assumption_two} of \cref{as:R_assumption} we use
  \cref{prop_localsol_two} from \cref{thm:prop_loc_sol}:
  \begin{align*}
    \norm[0]{\mathcal{R}(u_h) - u_h }^2_{L^2(\Omega)}
    & = \norm[0]{\sigma_h}^2_{L^2(\Omega)}
      \le \sum_{V \in \mathcal{V}} \norm[0]{\sigma_h^V}^2_{L^2(\omega_V)}
    \\
    & \le \sum_{V \in \mathcal{V}} \norm[0]{\sigma_h^V}^2_{\Sigma_h^V}
      \lesssim \sum_{V \in \mathcal{V}} h \norm[0]{\jump{u_h \cdot n}}^2_{L^2(\mathcal{F}_V)}
      \le h^2 \tnorm{u_h}^2_1.
  \end{align*}
  The result follows after taking the square root on both sides.

  Finally, we prove \cref{as:R_assumption_three} of
  \cref{as:R_assumption}. Let $g \in [L^2(\Omega)]^d$ be
  arbitrary. Using \cref{prop_localsol_three} and
  \cref{prop_localsol_two} from \cref{thm:prop_loc_sol} we find that
  \begin{align*}
    (g, \mathcal{R}(u_h) - u_h)_{L^2(\Omega)}
    &= \sum_{V \in \mathcal{V}} \int_{\omega_V} \sigma_h^V \cdot g \dif x
    \\
    &= \sum_{V \in \mathcal{V}} \int_{\omega_V} \sigma_h^V \cdot (g - \Pi^{k-2}_{\omega_V}g) \dif x
    \\
    &\le \sum_{V \in \mathcal{V}} \sbr{ \del[2]{\frac{1}{h}\norm[0]{\sigma_h^V}_{L^2(\omega_V)}}
      \del[2]{h^2\int_{\omega_V}(g - \Pi^{k-2}_{\omega_V}g)^2 \dif x}^{1/2}}
    \\
    &\le \del[2]{\sum_{V \in \mathcal{V}} \frac{1}{h^2}\norm[0]{\sigma_h^V}^2_{L^2(\omega_V)}}^{1/2}
      \del[2]{\sum_{V \in \mathcal{V}} h^2\int_{\omega_V}(g - \Pi^{k-2}_{\omega_V}g)^2 \dif x}^{1/2}
    \\
    &\le \del[2]{\sum_{V \in \mathcal{V}} \frac{1}{h} \norm[0]{\jump{u_h \cdot n}}^2_{\mathcal{F}_V}}^{1/2}
      \del[2]{\sum_{V \in \mathcal{V}} h^2\int_{\omega_V}(g - \Pi^{k-2}_{\omega_V}g)^2 \dif x}^{1/2}
    \\
    & \le \tnorm{u_h}_1 \del[2]{\sum_{V \in \mathcal{V}} h^2\int_{\omega_V}(g - \Pi^{k-2}_{\omega_V}g)^2 \dif x}^{1/2},
  \end{align*}
  concluding the proof.
\end{proof}

We remark that the reconstruction operator defined in
\cref{lem:def:reconstruction} can easily be implemented in existing
EDG codes for the Stokes problem since it is applied only to the right
hand side of the discretization \cref{eq:discrete_stokes_pr}. The left
hand side matrices of \cref{eq:discrete_stokes} and
\cref{eq:discrete_stokes_pr} are \emph{identical}.

\section{Numerical tests}
\label{sec:numerical}

In this section we present two and three dimensional numerical
examples that demonstrate that the modified EDG discretization
\cref{eq:discrete_stokes_pr} with the reconstruction operator defined
in \cref{lem:def:reconstruction} results in a pressure-robust
discretization of the Stokes equations. All numerical examples have
been implemented in the higher-order finite element library Netgen/NGSolve
\cite{Schoberl:1997, Schoberl:2014}.

We study the Stokes problem~\cref{eq:stokes} on $\Omega :=
[0,1]^d$. For $d=2$ we set the body force $f$ such that the exact
solution is given by
\begin{equation*}
  u = \operatorname{curl}(\xi)\quad \textrm{ and } \quad p = x_1^5+x_2^5-1/3,
\end{equation*}
where $\xi = x_1^2(x_1-1)^2x_2^2(x_2-1)^2$, while for $d=3$ the body
force is such that the exact solution is given by
\begin{equation*}
  u = \operatorname{curl}((\xi,\xi,\xi))\quad \textrm{ and } \quad p = x_1^5+x_2^5+x_3^5-1/2,
\end{equation*}
where $\xi = x_1^2(x_1-1)^2x_2^2(x_2-1)^2x_3^2(x_3-1)^2$.

In this section we denote the discrete velocity solution obtained by
the EDG method \cref{eq:discrete_stokes} by $u_h$ and the discrete
velocity solution obtained by the modified EDG method
\cref{eq:discrete_stokes_pr} by $u_h^{\star}$. We furthermore define
$e_h = u - u_h$ and $e_h^{\star} = u - u_h^{\star}$.
  
For the two dimensional test case we plot, in \cref{fig:2dL2norm} and
\cref{fig:2dH1seminorm}, respectively, the $L^2$-norm and
$H^1$-seminorm (the $L^2$-norm of the gradient) of $e_h$ and
$e_h^{\star}$. We compute the solution for polynomial orders $k=2,3,4$
and for both $m=k$ and $m=k-1$ in \cref{eq:approx_spaces_edg}. We fix
the viscosity to $\nu = 10^{-6}$.

From both figures we first observe optimal rates of convergence for
all methods. We also observe that $u_h^{\star}$ is not affected by the
choice of $m$. However, the choice of $m$ does affect $u_h$, as we
discuss next.

For the case $m=k-1$ we observe that $u_h^{\star}$ is significantly
more accurate than $u_h$ when $\nu=10^{-6}$; $e_h^{\star}$ is
$10^3$--$10^5$ times smaller (depending on $k$) than $e_h$ in both the
$L^2$-norm and in the $H^1$-seminorm.

When $m=k$ we note that $e_h$ is approximately $10^2$ times smaller
than when $m=k-1$. We conjecture that this is due to a better
enforcement of continuity of the normal component of the velocity. As
such, when $m=k$, $u_h$ is `closer' to an $H(\divergence)$-conforming
velocity than when $m=k-1$ mitigating the role of the pressure-error
in \cref{eq:notpressurerob}. Due to $u_h$ being more accurate when
$m=k$, and that the accuracy of $u_h^{\star}$ does not seem to depend
on $m$, we observe that $u_h^{\star}$ is `only' $10$--$10^2$ times
more accurate (depending on $k$) than $u_h$.

We now vary the viscosity from $\nu = 1$ to $\nu=10^{-9}$ and plot the
$H^1$-seminorm on a fixed mesh with $|\mathcal{T}|=230$ triangles for
polynomial orders $k=2,3,4$ in \cref{fig:2dnuH1seminorm}. We observe
that $e_h^{\star}$ is not affected by $\nu$ thereby verifying
\cref{cor:ratesofconv}. As expected from \cref{eq:notpressurerob}, the
accuracy of $u_h$ deteriorates as viscosity decreases. Furthermore, in
agreement with our previous observations, the solution $u_h$ with
$m=k$ is approximately $10^2$ times more accurate than when $m=k-1$.

In \cref{fig:3derrors} we plot the $L^2$-norm and $H^1$-seminorm of
the error of the discrete velocity for the three dimensional test
case. We again set $\nu = 10^{-6}$ and compute the solution for
polynomial orders $k=1,2$. We consider only the case $m=k$ since the
EDG method with $m=k-1$ is not defined for $k=1$. We draw the same
conclusions as in the two dimensional test case, namely optimal rates
of convergence for all methods and a pressure-robust discrete velocity
approximation when using the modified EDG method.

\pgfplotstableread{3dconvergence_nu_1e-06_order_1_phatlo_False_rec_True.dat}  \tdoneplfrect
\pgfplotstableread{3dconvergence_nu_1e-06_order_1_phatlo_False_rec_False.dat} \tdoneplfrecf
\pgfplotstableread{3dconvergence_nu_1e-06_order_2_phatlo_False_rec_True.dat}  \tdtwoplfrect
\pgfplotstableread{3dconvergence_nu_1e-06_order_2_phatlo_False_rec_False.dat} \tdtwoplfrecf

\pgfplotstableread{convergence_nu_1e-06_order_2_phatlo_False_rec_True.dat}  \twoplfrect
\pgfplotstableread{convergence_nu_1e-06_order_2_phatlo_False_rec_False.dat} \twoplfrecf
\pgfplotstableread{convergence_nu_1e-06_order_2_phatlo_True_rec_True.dat}   \twopltrect
\pgfplotstableread{convergence_nu_1e-06_order_2_phatlo_True_rec_False.dat}  \twopltrecf

\pgfplotstableread{convergence_nu_1e-06_order_3_phatlo_False_rec_True.dat}  \threeplfrect
\pgfplotstableread{convergence_nu_1e-06_order_3_phatlo_False_rec_False.dat} \threeplfrecf
\pgfplotstableread{convergence_nu_1e-06_order_3_phatlo_True_rec_True.dat}   \threepltrect
\pgfplotstableread{convergence_nu_1e-06_order_3_phatlo_True_rec_False.dat}  \threepltrecf

\pgfplotstableread{convergence_nu_1e-06_order_4_phatlo_False_rec_True.dat}  \fourplfrect
\pgfplotstableread{convergence_nu_1e-06_order_4_phatlo_False_rec_False.dat} \fourplfrecf
\pgfplotstableread{convergence_nu_1e-06_order_4_phatlo_True_rec_True.dat}   \fourpltrect
\pgfplotstableread{convergence_nu_1e-06_order_4_phatlo_True_rec_False.dat}  \fourpltrecf

\definecolor{myblue}{RGB}{62,146,255}
\definecolor{mygreen}{RGB}{22,135,118}
\definecolor{myred}{RGB}{255,145,0}

\begin{figure}  
\begin{tikzpicture}
  [
  scale=0.96
  ]
  \begin{axis}[
    name=plot1,
    scale=0.6,
    title ={ $m = k$},
    legend style={text height=0.7em },
    legend style={draw=none},
    style={column sep=0.1cm},
    ylabel=$L^2$-norm error,
    xlabel=number of elements $|\mathcal{T}|$,
    xmode=log,
    xmin = 1e1,
    xmax = 3e4,
    ymode=log,
    ytick = {1e-8, 1e-6,1e-4,1e-2,1, 1e2},
    y tick label style={
      /pgf/number format/.cd,
      fixed,
      precision=2
    },
    x tick label style={
      /pgf/number format/.cd,
      fixed,
      precision=2
    },
    %
    grid=both,
    legend style={
      cells={align=left},
      anchor = north west
    },
    ]

    \addlegendimage{only marks, mark=*,black}
    \addlegendimage{only marks, mark=triangle*,black}
    \addlegendimage{only marks, mark=square*,black}
    \addlegendimage{line width=0.5pt, myblue}
    \addlegendimage{line width=0.5pt, mygreen}
    \addlegendimage{dashed,line width=0.5pt}    
    \addlegendimage{densely dotted,line width=0.5pt}

    \addplot[line width=0.5pt, color=mygreen, mark=*] table[x=1, y=3]{\threeplfrect};
    \addplot[line width=0.5pt, color=mygreen, mark=triangle*] table[x=1, y=3]{\fourplfrect};
    \addplot[line width=0.5pt, color=mygreen, mark=square*] table[x=1, y=3]{\twoplfrect};

    \addplot[line width=0.5pt, color=myblue, mark=*] table[x=1, y=3]{\threeplfrecf};
    \addplot[line width=0.5pt, color=myblue, mark=triangle*] table[x=1, y=3]{\fourplfrecf};
    \addplot[line width=0.5pt, color=myblue, mark=square*] table[x=1, y=3]{\twoplfrecf};

        \addplot[line width=0.5pt, dashed, color=black] table[x=1,y expr={1/\thisrowno{1}^1.5/12}]{\threeplfrect};
    \addplot[line width=0.5pt, dashdotted, color=black] table[x=1,y expr={1/\thisrowno{1}^2/13}]{\threeplfrect};
    \addplot[line width=0.5pt, densely dotted, color=black] table[x=1,y expr={1/\thisrowno{1}^2.5/14}]{\threeplfrect};
    
  \end{axis}
   \begin{axis}[
     name=plot2,
     at=(plot1.right of south east),
     anchor=left of south west,
     title = {$m = k-1$},
     legend entries={$k=2$, $k=3$,$k=4$, $e_h$, $e_h^{\star}$, $h^3$, $h^4$,$h^5$},
     scale=0.6,
    legend style={text height=0.7em },
    legend style={draw=none},
    style={column sep=0.15cm},
    xlabel=number of elements $|\mathcal{T}|$,
    xmode=log,
    ymode=log,
    ytick = {1e-8,1e-6,1e-4,1e-2,1, 1e2, 1e4},
    y tick label style={
      /pgf/number format/.cd,
      fixed,
      precision=2
    },
    x tick label style={
      /pgf/number format/.cd,
      fixed,
      precision=2
    },
    %
    xmin = 1e1,
    xmax = 3e4,
    grid=both,
    legend style={
      cells={align=left},
      at={(1.05,1)},
      anchor = north west
    },
    ]

        \addlegendimage{only marks, mark=square*,black}
     \addlegendimage{only marks, mark=*,black}
    \addlegendimage{only marks, mark=triangle*,black}

    \addlegendimage{line width=0.5pt, myblue}
    \addlegendimage{line width=0.5pt, mygreen}
    \addlegendimage{dashed,line width=0.5pt}    
    \addlegendimage{dashdotted,line width=0.5pt}
    \addlegendimage{densely dotted,line width=0.5pt}

     \addplot[line width=0.5pt, color=mygreen, mark=*] table[x=1, y=3]{\threepltrect};
    \addplot[line width=0.5pt, color=mygreen, mark=triangle*] table[x=1, y=3]{\fourpltrect};
    \addplot[line width=0.5pt, color=mygreen, mark=square*] table[x=1, y=3]{\twopltrect};

    \addplot[line width=0.5pt, color=myblue, mark=*] table[x=1, y=3]{\threepltrecf};
    \addplot[line width=0.5pt, color=myblue, mark=triangle*] table[x=1, y=3]{\fourpltrecf};
    \addplot[line width=0.5pt, color=myblue, mark=square*] table[x=1, y=3]{\twopltrecf};

           \addplot[line width=0.5pt, dashed, color=black] table[x=1,y expr={1/\thisrowno{1}^1.5/12}]{\threeplfrect};
    \addplot[line width=0.5pt, dashdotted, color=black] table[x=1,y expr={1/\thisrowno{1}^2/13}]{\threeplfrect};
    \addplot[line width=0.5pt, densely dotted, color=black] table[x=1,y expr={1/\thisrowno{1}^2.5/14}]{\threeplfrect};
  \end{axis}
\end{tikzpicture}
\caption{Two dimensional test case as described in
  \cref{sec:numerical} using $\nu = 10^{-6}$. We plot the $L^2$-norm
  error of the velocity, against the number of elements in the mesh,
  for polynomial degrees $k=2,3,4$. Here $e_h = u-u_h$ and
  $e_h^{\star}=u-u_h^{\star}$ with $u_h$ the discrete velocity
  solution to the EDG method \cref{eq:discrete_stokes} and
  $u_h^{\star}$ the discrete velocity solution to the modified EDG
  method \cref{eq:discrete_stokes_pr}. The solutions are computed both
  for $m=k$ in \cref{eq:approx_spaces_edg} (left) and when $m=k-1$
  (right).}
\label{fig:2dL2norm}
\end{figure}

\begin{figure}  
\begin{tikzpicture}
  [
  scale=0.96
  ]
  \begin{axis}[
    name=plot1,
    scale=0.6,
    title ={ $m = k$},
    legend style={text height=0.7em },
    legend style={draw=none},
    style={column sep=0.1cm},
    ylabel=$H^1$-seminorm error,
    xlabel=number of elements $|\mathcal{T}|$,
    xmode=log,
    xmin = 1e1,
    xmax = 1e4,
    ymode=log,
    ytick = {1e-8, 1e-6,1e-4,1e-2,1, 1e2},
    y tick label style={
      /pgf/number format/.cd,
      fixed,
      precision=2
    },
    x tick label style={
      /pgf/number format/.cd,
      fixed,
      precision=2
    },
    %
    grid=both,
    legend style={
      cells={align=left},
      anchor = north west
    },
    ]

    \addlegendimage{only marks, mark=*,black}
    \addlegendimage{only marks, mark=triangle*,black}
    \addlegendimage{only marks, mark=square*,black}
    \addlegendimage{line width=0.5pt, myblue}
    \addlegendimage{line width=0.5pt, mygreen}
    \addlegendimage{dashed,line width=0.5pt}    
    \addlegendimage{densely dotted,line width=0.5pt}

    \addplot[line width=0.5pt, color=mygreen, mark=*] table[x=1, y=2]{\threeplfrect};
    \addplot[line width=0.5pt, color=mygreen, mark=triangle*] table[x=1, y=2]{\fourplfrect};
    \addplot[line width=0.5pt, color=mygreen, mark=square*] table[x=1, y=2]{\twoplfrect};

    \addplot[line width=0.5pt, color=myblue, mark=*] table[x=1, y=2]{\threeplfrecf};
    \addplot[line width=0.5pt, color=myblue, mark=triangle*] table[x=1, y=2]{\fourplfrecf};
    \addplot[line width=0.5pt, color=myblue, mark=square*] table[x=1, y=2]{\twoplfrecf};
    
    \addplot[line width=0.5pt, dashed, color=black] table[x=1,y expr={1/\thisrowno{1}/4}]{\threeplfrect};
    \addplot[line width=0.5pt, dashdotted, color=black] table[x=1,y expr={1/\thisrowno{1}^1.5/4}]{\threeplfrect};
    \addplot[line width=0.5pt, densely dotted, color=black] table[x=1,y expr={1/\thisrowno{1}^2/4}]{\threeplfrect};
        
  \end{axis}
   \begin{axis}[
     name=plot2,
     at=(plot1.right of south east),
     anchor=left of south west,
     title ={ $m = k-1$},
     legend entries={$k=2$, $k=3$,$k=4$, $e_h$, $e_h^{\star}$, $h^2$, $h^3$,$h^4$},                            
     scale=0.6,
    legend style={text height=0.7em },
    legend style={draw=none},
    style={column sep=0.15cm},
    xlabel=number of elements $|\mathcal{T}|$,
    xmode=log,
    ymode=log,
    ytick = {1e-8,1e-6,1e-4,1e-2,1, 1e2, 1e4},
    y tick label style={
      /pgf/number format/.cd,
      fixed,
      precision=2
    },
    x tick label style={
      /pgf/number format/.cd,
      fixed,
      precision=2
    },
    %
    xmin = 1e1,
    xmax = 1e4,
    grid=both,
    legend style={
      cells={align=left},
      at={(1.05,1)},
      anchor = north west
    },
    ]

        \addlegendimage{only marks, mark=square*,black}
     \addlegendimage{only marks, mark=*,black}
    \addlegendimage{only marks, mark=triangle*,black}

    \addlegendimage{line width=0.5pt, myblue}
    \addlegendimage{line width=0.5pt, mygreen}
    \addlegendimage{dashed,line width=0.5pt}    
    \addlegendimage{dashdotted,line width=0.5pt}
    \addlegendimage{densely dotted,line width=0.5pt}

     \addplot[line width=0.5pt, color=mygreen, mark=*] table[x=1, y=2]{\threepltrect};
    \addplot[line width=0.5pt, color=mygreen, mark=triangle*] table[x=1, y=2]{\fourpltrect};
    \addplot[line width=0.5pt, color=mygreen, mark=square*] table[x=1, y=2]{\twopltrect};

    \addplot[line width=0.5pt, color=myblue, mark=*] table[x=1, y=2]{\threepltrecf};
    \addplot[line width=0.5pt, color=myblue, mark=triangle*] table[x=1, y=2]{\fourpltrecf};
    \addplot[line width=0.5pt, color=myblue, mark=square*] table[x=1, y=2]{\twopltrecf};
    
    \addplot[line width=0.5pt, dashed, color=black] table[x=1,y expr={1/\thisrowno{1}/4}]{\threeplfrect};
    \addplot[line width=0.5pt, dashdotted, color=black] table[x=1,y expr={1/\thisrowno{1}^1.5/4}]{\threeplfrect};
    \addplot[line width=0.5pt, densely dotted, color=black] table[x=1,y expr={1/\thisrowno{1}^2/4}]{\threeplfrect};
        
  \end{axis}
\end{tikzpicture}
\caption{Two dimensional test case as described in
  \cref{sec:numerical} using $\nu = 10^{-6}$. We plot the
  $H^1$-seminorm error of the velocity, against the number of elements
  in the mesh, for polynomial degrees $k=2,3,4$. Here $e_h = u-u_h$
  and $e_h^{\star}=u-u_h^{\star}$ with $u_h$ the discrete velocity
  solution to the EDG method \cref{eq:discrete_stokes} and
  $u_h^{\star}$ the discrete velocity solution to the modified EDG
  method \cref{eq:discrete_stokes_pr}. The solutions are computed both
  for $m=k$ in \cref{eq:approx_spaces_edg} (left) and when $m=k-1$
  (right).}
\label{fig:2dH1seminorm}
\end{figure}

\pgfplotstableread{nuconvergence_order_2_phatlo_False_rec_False.dat}  \nutwophfrf 
\pgfplotstableread{nuconvergence_order_2_phatlo_True_rec_False.dat}  \nutwophtrf  
\pgfplotstableread{nuconvergence_order_2_phatlo_False_rec_True.dat}  \nutwophfrt  
\pgfplotstableread{nuconvergence_order_2_phatlo_True_rec_True.dat}  \nutwophtrt   

\pgfplotstableread{nuconvergence_order_3_phatlo_False_rec_False.dat}  \nuthreephfrf 
\pgfplotstableread{nuconvergence_order_3_phatlo_True_rec_False.dat}   \nuthreephtrf 
\pgfplotstableread{nuconvergence_order_3_phatlo_False_rec_True.dat}   \nuthreephfrt 
\pgfplotstableread{nuconvergence_order_3_phatlo_True_rec_True.dat}    \nuthreephtrt 

\pgfplotstableread{nuconvergence_order_4_phatlo_False_rec_False.dat}  \nufourphfrf 
\pgfplotstableread{nuconvergence_order_4_phatlo_True_rec_False.dat}   \nufourphtrf 
\pgfplotstableread{nuconvergence_order_4_phatlo_False_rec_True.dat}   \nufourphfrt 
\pgfplotstableread{nuconvergence_order_4_phatlo_True_rec_True.dat}    \nufourphtrt 

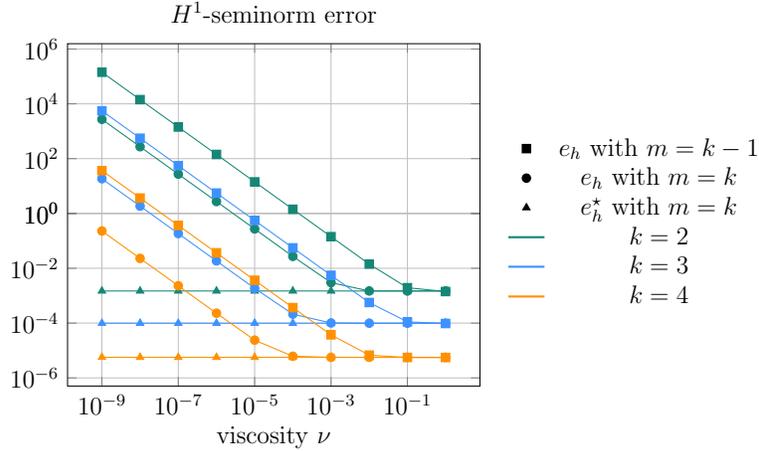
\begin{figure}
  \begin{center}
\begin{tikzpicture}
  [
  scale=0.8
  ]
  \begin{axis}[
    name=plot1,
    title={$H^1$-seminorm error},
    legend entries={$e_h \textrm{ with } m=k-1$, $e_h \textrm{ with } m=k$, $e_h^{\star} \textrm{ with }m=k$, $k=2$,$k=3$,$k=4$},
     scale=1,
    legend style={text height=0.7em },
    legend style={draw=none},
    style={column sep=0.15cm},
    xlabel=viscosity $\nu$,
    xmode=log,
    ymode=log,
    ytick = {1e-6, 1e-4,1,1e-2,1,1e2,1e4,1e6},
    y tick label style={
      /pgf/number format/.cd,
      fixed,
      precision=2
    },
    x tick label style={
      /pgf/number format/.cd,
      fixed,
      precision=2
    },
    %
    grid=both,
    legend style={
      cells={align=left},
      at={(1.05,0.75)},
      anchor = north west    },
    ]

    \addlegendimage{only marks, mark=square*}
    \addlegendimage{only marks, mark=*}
    \addlegendimage{only marks, mark=triangle*}

    \addlegendimage{line width=1pt, mygreen}
    \addlegendimage{line width=1pt, myblue}
    \addlegendimage{line width=1pt, myred}
    
    \addplot[line width=0.5pt, color=mygreen,mark=*] table[x=0,y=2]{\nutwophfrf};
    \addplot[line width=0.5pt, color=mygreen,mark=triangle*] table[x=0,y=2]{\nutwophfrt};
    \addplot[line width=0.5pt, color=mygreen,mark=square*] table[x=0,y=2]{\nutwophtrf};       

    \addplot[line width=0.5pt, color=myblue,mark=*] table[x=0,y=2]{\nuthreephfrf};
    \addplot[line width=0.5pt, color=myblue,mark=triangle*] table[x=0,y=2]{\nuthreephfrt};
    \addplot[line width=0.5pt, color=myblue,mark=square*] table[x=0,y=2]{\nuthreephtrf};

    \addplot[line width=0.5pt, color=myred,mark=*] table[x=0,y=2]{\nufourphfrf};
    \addplot[line width=0.5pt, color=myred,mark=triangle*] table[x=0,y=2]{\nufourphfrt};
    \addplot[line width=0.5pt, color=myred,mark=square*] table[x=0,y=2]{\nufourphtrf};
    
  \end{axis}
\end{tikzpicture}
  \end{center}
  \caption{Two dimensional test case as described in
    \cref{sec:numerical} using a fixed mesh with $|\mathcal{T}|=230$
    elements. We plot the $H^1$-seminorm error of the velocity against
    varying viscosities $\nu =1,\ldots,10^{-9}$, for polynomial
    degrees $k=2,3,4$. Here $e_h = u-u_h$ and
    $e_h^{\star}=u-u_h^{\star}$ with $u_h$ the discrete velocity
    solution to the EDG method \cref{eq:discrete_stokes} and
    $u_h^{\star}$ the discrete velocity solution to the modified EDG
    method \cref{eq:discrete_stokes_pr}. The solutions $u_h$ are
    computed both for $m=k$ and $m=k-1$.}
  \label{fig:2dnuH1seminorm}
\end{figure}

\begin{figure}  
\begin{tikzpicture}
  [
  scale=0.96
  ]
  \begin{axis}[
    name=plot1,
    scale=0.6,
    title = $L^2$-norm error,
    legend style={text height=0.7em },
    legend style={draw=none},
    style={column sep=0.1cm},
    ylabel=Error,
    xlabel=number of elements $|\mathcal{T}|$,
    xmode=log,
    ymode=log,
    ytick = {1e-7, 1e-5,1e-3,1e-1,10,1e3},
    y tick label style={
      /pgf/number format/.cd,
      fixed,
      precision=2
    },
    x tick label style={
      /pgf/number format/.cd,
      fixed,
      precision=2
    },
    %
    grid=both,
    legend style={
      cells={align=left},
      anchor = north west
    },
    ]

    \addplot[line width=0.5pt, color=mygreen, mark=*] table[x=1, y=3]{\tdoneplfrect};
    \addplot[line width=0.5pt, color=mygreen, mark=triangle*] table[x=1, y=3]{\tdtwoplfrect};

    \addplot[line width=0.5pt, color=myblue, mark=triangle*] table[x=1, y=3]{\tdtwoplfrecf};
    \addplot[line width=0.5pt, color=myblue, mark=*] table[x=1, y=3]{\tdoneplfrecf};

    \addplot[line width=0.5pt, dashed, color=black] table[x=1,y expr={1/\thisrowno{1}^0.66666/10}]{\tdoneplfrect};
    \addplot[line width=0.5pt, densely dotted, color=black] table[x=1,y expr={1/\thisrowno{1}/100}]{\tdoneplfrect};
    
  \end{axis}
   \begin{axis}[
     name=plot2,
     at=(plot1.right of south east),
     anchor=left of south west,
     title = $H^1$-seminorm error,
     legend entries={$k=1$, $k=2$, $e_h$, $e_h^{\star}$, $h$, $h^2$, $h^3$},                            
     scale=0.6,
    legend style={text height=0.7em },
    legend style={draw=none},
    style={column sep=0.15cm},
    xlabel=number of elements $|\mathcal{T}|$,
    xmode=log,
    ymode=log,
    ytick = {1e-4, 1e-2,1,1e2, 1e4},
    y tick label style={
      /pgf/number format/.cd,
      fixed,
      precision=2
    },
    x tick label style={
      /pgf/number format/.cd,
      fixed,
      precision=2
    },
    %
    grid=both,
    legend style={
      cells={align=left},
      at={(1.05,1)},
      anchor = north west
    },
    ]

     \addlegendimage{only marks, mark=*,black}
    \addlegendimage{only marks, mark=triangle*,black}
    \addlegendimage{line width=1pt, myblue}
    \addlegendimage{line width=1pt, mygreen}
    \addlegendimage{dashed,line width=0.5pt}
    \addlegendimage{dashdotted,line width=0.5pt}
    \addlegendimage{densely dotted,line width=0.5pt}

    \addplot[line width=0.5pt, color=mygreen, mark=*] table[x=1, y=2]{\tdoneplfrect};
    \addplot[line width=0.5pt, color=mygreen, mark=triangle*] table[x=1, y=2]{\tdtwoplfrect};

    \addplot[line width=0.5pt, color=myblue, mark=triangle*] table[x=1, y=2]{\tdtwoplfrecf};
    \addplot[line width=0.5pt, color=myblue, mark=*] table[x=1, y=2]{\tdoneplfrecf};

    \addplot[line width=0.5pt, dashed, color=black] table[x=1,y expr={1/\thisrowno{1}^0.33333/15}]{\tdoneplfrect};
    \addplot[line width=0.5pt, dashdotted, color=black] table[x=1,y expr={1/\thisrowno{1}^0.66666/10}]{\tdoneplfrect};
  \end{axis}
\end{tikzpicture}
\caption{Three dimensional test case as described in
  \cref{sec:numerical} using $\nu = 10^{-6}$. We plot the $L^2$-norm
  (left) and $H^1$-seminorm (right) errors of the velocity, against
  the number of elements in the mesh, for polynomial degrees
  $k=1,2$. Here $e_h = u-u_h$ and $e_h^{\star}=u-u_h^{\star}$ with
  $u_h$ the discrete velocity solution to the EDG method
  \cref{eq:discrete_stokes} and $u_h^{\star}$ the discrete velocity
  solution to the modified EDG method
  \cref{eq:discrete_stokes_pr}. The solutions are computed using $m=k$
  in \cref{eq:approx_spaces_edg}.}
\label{fig:3derrors}
\end{figure}

\section{Conclusions}
\label{sec:conclusions}

We introduced a new reconstruction operator that restores
pressure-robustness for an embedded discontinuous Galerkin
discretization of the Stokes equations. We have shown that this
reconstruction operator can be constructed locally on vertex patches
and needs to be applied only to the right hand side vector, allowing
for easy implementation in existing codes. Furthermore, by an a priori
error analysis, we showed that the velocity errors converge
optimally. Numerical examples in two and three dimensions support our
analysis.

\bibliographystyle{siamplain}
\bibliography{references}
\end{document}